\documentclass[11pt,leqno]{amsart}
\usepackage{amsthm,amsfonts,amssymb,amsmath,oldgerm}
\usepackage{graphicx, xcolor}
\numberwithin{equation}{section}
\usepackage{fullpage}
\usepackage{color}
\usepackage{calrsfs}
\DeclareMathAlphabet{\pazocal}{OMS}{zplm}{m}{n}


\newcommand{\dt}{\frac{\Delta_\tau x}{\tau}}
\newcommand{\dtx}{\Delta_\tau x'}


\def\eps{\varepsilon }



\newcommand\R{\mathbb R}

\def\eps{\varepsilon}


\newcommand\br{\begin{remark}}
\newcommand\er{\end{remark}}
\newcommand\bp{\begin{pmatrix}}
\newcommand\ep{\end{pmatrix}}
\newcommand{\be}{\begin{equation}}
\newcommand{\ee}{\end{equation}}
\newcommand\ba{\begin{equation}\begin{aligned}}
\newcommand\ea{\end{aligned}\end{equation}}


\newcommand{\bap}{\begin{app}}
\newcommand{\eap}{\end{app}}
\newcommand{\begs}{\begin{exams}}
\newcommand{\eegs}{\end{exams}}
\newcommand{\beg}{\begin{example}}
\newcommand{\eeg}{\end{exaplem}}
\newcommand{\bpr}{\begin{proposition}}
\newcommand{\epr}{\end{proposition}}
\newcommand{\bt}{\begin{theorem}}
\newcommand{\et}{\end{theorem}}
\newcommand{\bc}{\begin{corollary}}
\newcommand{\ec}{\end{corollary}}
\newcommand{\bl}{\begin{lemma}}
\newcommand{\el}{\end{lemma}}
\newcommand{\bd}{\begin{definition}}
\newcommand{\ed}{\end{definition}}
\newcommand{\brs}{\begin{remarks}}
\newcommand{\ers}{\end{remarks}}



\newcommand{\rmi}{{\mathrm{i}}}

\newcommand{\const}{\text{\rm constant}}
\newcommand{\Id}{{\rm Id }}

\newcommand{\Range}{{\rm Range }}

\newcommand{\sgn}{\text{\rm sgn}}
\newtheorem{theorem}{Theorem}[section]
\newtheorem{proposition}[theorem]{Proposition}
\newtheorem{corollary}[theorem]{Corollary}
\newtheorem{lemma}[theorem]{Lemma}

\theoremstyle{remark}
\newtheorem{remark}[theorem]{Remark}
\theoremstyle{definition}
\newtheorem{definition}[theorem]{Definition}

\newtheorem{example}[theorem]{Example}


\newcommand\cG{{\mathcal G}}

\newcommand\bH{{\mathbb H}}











\newcommand{\beq}{\begin{equation}}
\newcommand{\eeq}{\end{equation}}



\title{
Instantaneous smoothing and exponential decay of solutions for a degenerate
evolution equation with application to Boltzmann's equation
}

\author{Fedor Nazarov}
\address{Kent State University, Kent, OH 44240}
\email{nazarov@math.kent.edu}
\thanks{Research of F.N. was partially supported
under NSF grant no. DMS-1600239}
\author{Kevin Zumbrun}
\address{Indiana University, Bloomington, IN 47405}
\email{kzumbrun@indiana.edu}
\thanks{Research of K.Z. was partially supported
under NSF grant no. DMS-0300487}

\begin{document}

\begin{abstract}
We establish an instantaneous smoothing property for decaying solutions on the half-line $(0,+\infty)$
of certain degenerate Hilbert space-valued evolution equations arising in kinetic theory, including in 
particular the steady Boltzmann equation.
Our results answer the two main open problems posed by Pogan and Zumbrun in their treatment of $H^1$ stable manifolds
of such equations,
showing that $L^2_{loc}$ solutions that remain sufficiently small in $L^\infty$ (i) decay exponentially, and
(ii) are $C^\infty$ for $t>0$, hence lie eventually in the $H^1$ stable manifold constructed by Pogan and Zumbrun.
\end{abstract}

\date{\today}
\maketitle

\section{Introduction}

The goal of this paper is to prove instantaneous smoothing and decay properties for $\bH$-valued solutions of
the evolution equation
\be\label{maineq}
(d/dt)(Ax)=-x + \cG(x),
\ee
where $\bH$ is a separable Hilbert space, $A:\bH\to \bH$ is a constant bounded self-adjoint operator,
$\cG:\bH\to \bH$ is an infinitely differentiable map with
\be\label{gprop}
\hbox{\rm $\cG(0)=0$, $\sup_{x\in \bH}\|D_x\cG\|\leq 1/4$, and 
$\sup_{x\in \bH}\|D_x^k \cG\|<+\infty $ for all $k$.}
\ee

Our a priori assumptions are just that $x:(0,+\infty)\to \bH$ belongs to ($\bH$-valued) $L^2_{loc}(0,+\infty)$ and $Ax\in H^1_{loc}(0,+\infty)$, with \eqref{maineq} holding as an equation in $L^2_{loc}(0,+\infty)$;
we call such a function a ``weak $L^2_{loc}$ solution'' of \eqref{maineq}.
Note that these conditions do not imply any a priori smoothness for $x$ itself because $A$ may be quite degenerate;
in particular, we have in mind the case considered for kinetic equations in \cite{PZ1,PZ2,Z1} 
of $A$ essentially singular, or {\it one-to-one but not boundedly invertible}.

These conditions seem quite natural to impose just to make sense of the differential equation \eqref{maineq}.  
However it should be noted that our study was motivated by some questions that were left open in \cite{PZ1} 
where another notion of solution, the so-called ``mild solution,'' was introduced.
We relate our results to \cite{PZ1} by showing that any mild solution is in fact a solution in our sense as well.

\br\label{Brmk}
The studies of \cite{PZ1,PZ2,Z1} concerned the situation $\cG(x)=B(x,x)$ with $B$ a bilinear map; 
we note that this satisfies the framework \eqref{maineq}--\eqref{gprop} for 
solutions with $\sup |x|$ sufficiently small.
This is relevant to kinetic (in particular, Boltzmann's) equations, as discussed in 
Section \ref{s:applications}.
\er

\br\label{gammarmk} 
The condition $\sup_{x\in \bH}\|D_x\cG\|\leq 1/4$ can be weakened to
$$
\sup_{x\in \bH}\|D_x\cG\|\leq \gamma
$$
for $\gamma<1$, with essentially no change in the arguments.\footnote{
	Namely, in the first step of the proof of Theorem \ref{mainthm},
	we may use $|\cG(x)|=|\cG(x)-\cG(0)|\leq \gamma |x|$ to obtain
	$-|x|^2 + \langle \cG(x),x\rangle \leq -(1-\gamma)|x|^2\lesssim -|x|^2$ in place of \eqref{first},
	and similarly in higher-derivative estimates throughout.}
The choice $\gamma=1/4$ is for expositional convenience.
\er

Our main result is the following.

\bt\label{mainthm}
For $A$ constant,  bounded, and self-adjoint, and $\cG\in C^\infty$ satisfying \eqref{gprop},
every weak $L^2_{loc}$ solution $x$ of \eqref{maineq} lying in $ L^\infty(0,+\infty)$ is, in fact, 
$C^\infty$ on $(0,+\infty)$, and decays exponentially with all derivatives as $t\to\infty$.  
Moreover, for all $t>0$ and $k\geq 0$,
		\be\label{est}
		|(d/dt)^k x(t)|\leq \begin{cases} C t^{-k}, & 0<t < 1,\\
			Ce^{-ct}, & t \geq 1,
		\end{cases}
		\ee
		where $c$ depends only on $\|A\|$, and $C$ on $\|x\|_{L^\infty(0,+\infty)}$, 
	$\|A\|$, $k$, and $\sup_{x\in \bH} \|D^j_x\cG\|$, $j=1,2,\dots, k+1$.
		\et

\br\label{armk}
For $\mathcal{G}\in C^r$, we obtain instantaneous $H^r_{loc}$/$C^{r-1}_{loc}$ smoothing by the same argument.
\er

\br
A similar approach gives instantaneous smoothing for $x\in L^2(0,+\infty)$,
with bounds $|(d/dt)^{k}x(t)|\leq Ct^{-1/2 -k}$, $t>0$,\footnote{
Specifically, substituting $L^2(0,+\infty)$ for $L^\infty(0,+\infty)$ in item 1 of the 
proof in Section \ref{s:smooth}, and using the Sobolev estimate in item 2 to bound $|x(t)|$ for $ t\leq 1$, 
gives $|x(\tau)|\leq Ct^{-1/2}$ for $\tau\geq t$, yielding  the result by Theorem \ref{mainthm}. }
\er

We believe that Theorem \ref{mainthm} may be of its own interest, but as far as the questions in \cite{PZ1} are
concerned, it implies, in the slightly different setting of Remark \ref{Brmk},
that $L^2_{loc}(0,+\infty)$ 
solutions remaining sufficiently small in sup norm must
by exponential decay of $\|x\|_{H^1(t,+\infty)}$
belong eventually to the $H^1$ stable manifold constructed by Pogan-Zumbrun,
which may be characterized \cite{PZ1} as the union of all trajectories $x$ with $\|x\|_{H^1(0,+\infty)}$ 
sufficiently small.
Moreover, it implies that this $H^1$ stable manifold, expressed as in \cite{PZ1} 
as the union of trajectories in $H^1(0,+\infty)$, lies in $C^\infty(t,+\infty)$ for any $t>0$.
We recall that the analysis of \cite{PZ1} was motivated by consideration of
the steady Boltzmann equation, which, as shown in \cite{MZ1}, 
may be put by an appropriate coordinate transformation into the framework \eqref{maineq} considered
here, with $\cG(x)=B(x,x)$, $B$ a bounded bilinear map.

The paper is organized as follows.
In Section \ref{s:smooth}, we prove the theorem itself.  The proof is rather elementary and the reader 
interested in the key ideas will lose almost nothing assuming that we deal with a finite-dimensional Hilbert space
$\bH$, understand all derivatives in the classical sense, and just aim at quantitative bounds independent of 
dimension, etc.
We will still clearly state all standard facts from the integration theory of Hilbert space-valued functions to make sure 
that everything works in the generality we need, but 
we refer the reader to, e.g., \cite{DU} for their proofs.

In Section \ref{s:connect}, we revisit equation \eqref{maineq} from the standpoint of existence and uniqueness,
in the process connecting with \cite{PZ1} and the notion of ``mild solution''.
Unlike Section \ref{s:smooth}, which can be read from scratch, Section \ref{s:connect} assumes of the reader at
least some familiarity with standard Fourier transform,
convolution, and Gagliardo-Nirenberg bounds, 
and, differently from the situation in Section \ref{s:smooth}, the issues of how exactly everything is defined and in what sense the equalities hold are central there.
Though it is for the most part self-contained, the reader of Section \ref{s:connect} will benefit in
Section \ref{s:relation} from familiarity with \cite{PZ1} and in Remark \ref{1-1rmk} will need it
to make full sense of the remark.

In Section \ref{s:applications},
we discuss implications of our results for the questions considered 
in \cite{PZ1}, especially as they concern Boltzmann's equation.  
Finally, in Section \ref{s:discussion}, we 
describe some perspectives and open problems.
We delegate the proofs of one standard and one ``semi-standard'' Sobolev type embedding 
theorem (Lemmas \ref{ldiff}--\ref{ldiff2})
to be used in Section \ref{s:smooth} to an appendix so as not to interrupt the flow of the main argument.  
This appendix can be read completely independently of the main text.
In a second appendix, we discuss integrability of Hilbert space-valued functions as it relates to the
variation of constants formula in Section \ref{s:connect}.
This appendix refers to but is not needed in the main text.
It may be skipped by the reader if desired.

\medskip
{\bf Acknowledgement:} Thanks to Benjamin Jaye for helpful discussions, and to Alin Pogan for 
several readings of the manuscript and helpful suggestions for the exposition.
Thanks also to the anonymous referees for their careful reading and helpful suggestions.

\section{Proof of the main theorem}\label{s:smooth}

We start with a technical lemma.

\bl\label{techlem}
Suppose that $F:(0,+\infty)\to \R$ is absolutely continuous, $f,g$ are nonnegative with $f\in L^1_{loc}(0,+\infty)$, $g$ measurable,
and $(d/dt)F\leq -F^2g+f$ a.e. on $(0,+\infty)$.  Then, for every $t> t'>0$,
		$$
F(t)\leq \int_{t'}^t f + \Big(\int_{t'}^t g\Big)^{-1}.
		$$
\el

\begin{proof}
Let $G(s):=F(s)-\int_{t'}^s f$, $s\geq t'$.  If there exists $s\in (t',t)$ such that $G(s)\leq 0$, then
$$
F(t)=F(s) + \int_s^tF'\leq \int_{t'}^sf + \int_s^t f = \int_{t'}^t f
$$
and we are done. Otherwise, $G>0$ on $(t',t)$, so $0< G\leq F$ and
$$
(d/dt)G \leq -F^2g \leq -G^2g,
$$
or, equivalently, $(d/dt)(1/G)\geq g$, hence $(1/G)(t)\geq \int_{t'}^t g$ and $G(t)\leq \Big( \int_{t'}^tg\Big)^{-1}$.
\end{proof}

\bc\label{techcor}
Suppose that $h, y:(0,+\infty)\to \bH$ are in $L^2(0,+\infty)$, $\langle h,y\rangle$ is
absolutely continuous, $f:(0,+\infty)\to \R $ is in $L^1(0,+\infty)$ and nonnegative, and
$(d/dt)\langle h,y\rangle \leq -|y|^2 +f$ a.e. on $(0,+\infty)$.
Then, for every $t>t'>0$, there holds
$ \langle h,y\rangle(t) \leq \int_{t'}^t f + \frac{ \int_{t'}^t  |h|^2}{(t-t')^2}$ and
$$
\int_t^{+\infty} |y|^2 \leq \int_{t'}^{+\infty} f + \frac{ \int_{t'}^{t}  |h|^2}{(t-t')^2}. 
$$
\ec

\begin{proof}
Note that $|y|^2 \geq \frac{ \langle h,y\rangle^2}{|h|^2+\eps}$ for all $\eps>0$ and
$\Big(\int_{t'}^t (1/(|h|^2+\eps))\Big)^{-1}\leq \frac{\int_{t'}^t (|h|^2 +\eps)}{(t-t')^2}$,
so Lemma \ref{techlem} with $F=\langle h,y\rangle$, $g=1/(|h|^2+\eps)$ implies 
$ \langle h,y\rangle(t) \leq \int_{t'}^t f + \frac{ \int_{t'}^t  (|h|^2+\eps)}{(t-t')^2} $ for all $\eps>0$, yielding
the first inequality in the limit as $\eps\to 0^+$.
Since there exists a sequence $t_k\to +\infty$ such that $\langle h,y\rangle (t_k)\to 0$, we get
$ \int_t^{t_k} |y|^2\leq -\langle h,y\rangle|_t^{t_k} + \int_t^{t_k} f. $
Taking the limit as $t_k\to +\infty$, we get the result.
\end{proof}

In order to be able to use this corollary, we need the following key observation.
\bl\label{2.1lem}
If $A$ is a bounded self-adjoint operator,
$x:(0,+\infty)\to \bH$ is in $L^2_{loc}(0,+\infty)$, and $Ax\in H^1_{loc}(0,+\infty)$, then
$\langle Ax, x\rangle$ is absolutely continuous, with
\be\label{key}
(d/dt)\langle Ax,x\rangle=2\langle (d/dt)(Ax),x\rangle.
\ee
\el

\begin{proof}
	Fix $\tau>0$ and consider the difference quotient
$$
	\begin{aligned}
		\frac{\Delta_\tau\langle Ax,x\rangle}{\tau}(t)&=
\frac{\langle \Delta_\tau (Ax)(t),x(t+\tau)\rangle}{\tau}+ \frac{\langle Ax(t),(\Delta_\tau x)(t)\rangle}{\tau}\\
	& = \frac{\langle \Delta_\tau (Ax)(t), x(t) + x(t+\tau)\rangle}{\tau},
	\end{aligned}
$$
where $\Delta_\tau v(t):=v(t+\tau)-v(t)$.  
Here, we have used in a crucial way self-adjointness of $A$.
	Since $A$ is bounded, 
$\frac{\Delta_\tau \langle Ax, x\rangle}{\tau}\in L^1_{loc}(0,+\infty)$, and we have the integral identity
$$
\int_{t'}^t \frac{\Delta_\tau\langle Ax,x\rangle}{\tau}=
\big(S_\tau\langle Ax,x\rangle\big)\Big|_{t'}^t,
$$
where 
\be\label{Stau}
\big(S_\tau v\big)(t):=(1/\tau)\int_t^{t+\tau} v.
\ee

Note now that, as $\tau\to 0^+$, $S_\tau(\langle Ax,x\rangle)\to \langle Ax,x\rangle$ a.e. while
$$
x(t)+x(t+\tau)\to 2x(t) 
$$
in $L^2_{loc}$ and,  
by Lemma \ref{ldiff}(ii), $ \frac{\Delta_\tau (Ax)}{\tau} \to (d/dt)(Ax)$ in $L^2_{loc}$.
Hence, passing to the limit as $\tau\to 0^+$, we get
$$
\langle Ax,x\rangle|_{t'}^t=\int_{t' }^t 2\langle (d/dt)(Ax),x\rangle
$$
for a.e. $t',t\in (0,+\infty)$, verifying \eqref{key}.  
\end{proof}

\begin{proof}[Proof of Theorem \ref{mainthm}]
We proceed by a series of steps bounding $x$ in successively higher norms.
The main idea of the argument may be seen in steps 1 and 2 showing instantaneous smoothing from $L^\infty$ to $H^1$.
Steps 3--5, showing higher regularity, proceed in similar but more complicated fashion.

\medskip

1. ({\it Proof that $x\in L^2(0,+\infty)$}).
By time and space rescaling, we can always assume without loss of generality that $\|A\|=1$ and
$\|x\|_{L^\infty(0,+\infty)} \leq 1$.  Now, we have
	\be\label{first}
(d/dt)\langle Ax,x\rangle= 2\langle d/dt(Ax),x\rangle=
-2|x|^2 + 2\langle \cG(x),x\rangle \leq -|x|^2,
\ee
so $\langle Ax,x\rangle$ is decreasing.  Moreover, since $|x|^2 \geq|\langle Ax,x\rangle|$, 
either $\langle Ax,x\rangle\to -\infty$ as $t\to +\infty$, which is impossible, since
$x\in L^\infty(0,+\infty)$ and $A$ is bounded, or else $\langle Ax,x\rangle\geq 0$ 
for all $t>0$, and $$\langle Ax,x\rangle(t)\leq e^{-t}\langle Ax,x\rangle(0)\leq e^{-t}.$$
For, otherwise, $(d/dt)\langle Ax,x\rangle\leq -|x|^2\leq \langle Ax,x\rangle$ would be 
eventually uniformly negative.
			
Thus, for any $T> t>0$, $ \int_t^T |x|^2\leq -\langle Ax,x\rangle|_t^{T}\leq \langle Ax,x\rangle(t) \leq e^{-t}.  $
Letting $T\to +\infty$, we get $\int_t^{+\infty}|x|^2\leq e^{-t}$ for all $t>0$.  
In particular, $x\in L^2(0,+\infty)$ and $\|x\|_{L^2(0,+\infty)}\leq 1$.

\medskip

2. ({\it Proof that $x\in H^1_{loc}(0,+\infty)$}).
It is enough to show that the difference ratios 
$ \frac{\Delta_\tau x}{\tau}$ are uniformly bounded in $L^2_{loc}(0,+\infty)$ as $\tau\to 0^+$.
To this end, write
\be\label{**}
(d/dt)\Big(A\dt\Big) =- \dt + \frac{\Delta_\tau \cG(x)}{\tau}.
\ee
			
Note that $A\dt \in H^1_{loc}(0,+\infty)$ for any fixed $\tau>0$.  So, we get
\be\label{forlater}
(d/dt)\Big\langle A \dt, \dt\Big\rangle
=-2\Big| \dt\Big|^2 +2\Big\langle \frac{\Delta_\tau \cG(x)}{\tau},\dt\Big\rangle
\leq -\Big| \dt\Big|^2,
\ee
because $\sup_{x\in \bH} \|D_x\cG\|\leq 1/4$, so $\big|\frac{\Delta_\tau \cG(x)}{\tau}\big|\leq \frac14 \big|\dt\big|$.
Note also that 
$$
\Big| A\dt\Big|= \Big| \frac{\Delta_\tau (Ax)}{\tau} \Big|
=\Big| S_\tau \big( d/dt(Ax)\big) \Big|= \big| S_\tau \big(-x+\cG(x)\big)\big| \leq 2  S_\tau |x|
$$
for all $\tau$, where $S_\tau$ is the averaging operator of \eqref{Stau}.

Thus, applying 
Corollary \ref{techcor}
with $h=A\dt$, $y=\dt$, $f=0$, we get
$$
\int_t^{+\infty} \Big|\dt\Big|^2\leq \frac{4\int_{t'}^t (S_\tau |x|)^2}{(t-t')^2}
$$
for any $t>t'>0$.
Letting $t'\to 0$ when $0<t<1$ and putting $t'=t-1$ when $t\geq 1$, then using the bounds
$(S_\tau|x|)^2\leq 1$ and 
$ \int_{t-1}^\infty (S_\tau |x|)^2\leq  \int_{t-1}^{+\infty}|x|^2 \leq ee^{-t} $ respectively, we get
$$
\int_t^{+\infty} \Big| \dt\Big|^2\leq \begin{cases}
\frac4t, & 0<t<1,\\ 4ee^{-t}, & t\geq 1.
\end{cases}
$$
Thus, $x\in H^1(t,+\infty)$ for any $t>0$ and the same bounds hold for $x'$
(Lemma \ref{ldiff}(i), Appendix \ref{s:sob}).
	 
Moreover, we can now estimate $x(t)$ for $t\geq 1$ by writing
$$
|x(t)|^2= 2\Big| \int_t^{+\infty} \langle x, x'\rangle\Big| \leq
2 \Big(\int_t^{+\infty} |x|^2\Big)^{1/2} \Big(\int_t^{+\infty} |x'|^2\Big)^{1/2} \leq Ce^{-t}.
$$
Finally, applying Lemma \ref{ldiff}(i)--(ii), and passing to the $L^2$-limit as $\tau\to 0^+$ in \eqref{**},
we see that $Ax'\in H^1_{loc}(0,+\infty)$ and
$$ 
(d/dt)(Ax')=-x' + D_x\cG(x').  
$$
Specifically, we first apply Lemma \ref{ldiff}(ii) to the $H^1_{loc}(0,+\infty)$ functions
$x$ and $\mathcal{G}(x)$ to see that $\tau^{-1}\Delta_\tau x\to x'$ and 
$\tau^{-1}\Delta_\tau \mathcal{G}(x)\to (d/dt)\mathcal{G}(x)=D_x\mathcal{G}(x')$ on the right-hand side of \eqref{**}.
But, this implies that the limit 
$$
\lim_{\tau\to 0^+}\|(d/dt)A \tau^{-1} \Delta_\tau  x\|_{L^2(t,+\infty)}=
\lim_{\tau\to 0^+}\|\tau^{-1} \Delta_\tau A x'\|_{L^2(t,+\infty)}
$$
of the $L^2(t,+\infty)$ norm of the left-hand side of \eqref{**} exists and is bounded for all $t>0$, 
hence $Ax'\in H^1_{loc}(0,+\infty)$ by Lemma \ref{ldiff}(i).
Here, we have used the fact just established that $x\in H^1_{loc}(0,+\infty)$ to rewrite
$(d/dt)A \tau^{-1} \Delta_\tau  x= \tau^{-1} \Delta_\tau A x'$.
Applying Lemma \ref{ldiff}(ii), we find, finally, that 
\be\label{1eq}
(d/dt)Ax'= \lim_{\tau\to 0^+}
\tau^{-1} \Delta_\tau Ax'= -x' + D_x\mathcal{G}(x').
\ee

\medskip

3. ({\it Proof that $x\in W^{1,4}_{loc}(0,+\infty)$}).
We shall start by fixing $\tau>0$ and considering the difference $\dtx$.
We have $A\dtx \in H^1(t,+\infty)$ for any $t>0$ and, applying the linear 
ifference operator
$\Delta_\tau$ to \eqref{1eq} and using the product rule 
$\Delta_\tau yz= y (\Delta_\tau z) + \Delta_\tau y (z(\cdot+\tau))$:
\ba\label{L}
	(d/dt)(A\dtx)&=-\dtx + \Delta_\tau D_x \cG(x')\\
					  &= -\dtx +D_x \cG(\dtx) +  (\Delta_\tau(D_x\cG)) (x'(\cdot +\tau)).
\ea
Passing to the corresponding differential equation for the quadratic form $\langle A\dtx,\dtx\rangle$,
we can combine the first two terms on the right-hand side using the bound 
$$
|D_x\cG(\dtx)|\leq \frac14 |\dtx|
$$
to get
$$
\begin{aligned}
(d/dt)\langle (A\dtx), \dtx \rangle &\leq -\frac32 |\dtx|^2 + 2\langle \dtx, \Delta_\tau (D_x \cG)(x'(\cdot + \tau))\rangle\\
&\leq - |\dtx|^2 + 2\|\Delta_\tau D_x \cG\|^2 |x'(\cdot + \tau)|^2\\
&\leq - |\dtx|^2 + C|\Delta_\tau x|^2 |x'(\cdot + \tau)|^2,
\end{aligned}
$$
where $C$ is controlled by $\sup_{x\in \bH} \|D_x^2\cG\|$.
Here, we have used Young's inequality to bound the term
$2\langle \dtx, \Delta_\tau (D_x \cG)(x'(\cdot + \tau))\rangle$ by an absorbable term $(1/2)|\Delta_\tau x'|^2$
plus $2\|\Delta_\tau D_x \cG\|^2 |x'(\cdot + \tau)|^2$.
Using the equation, and the condition $\|D_x\cG\|\leq \frac14$, we also have
$$
|A(\dtx)|= |\Delta_\tau (d/dt)(Ax)|= |\Delta_\tau (-x + \cG(x))| \leq 2|\Delta_\tau x|.
$$
Finally, we have by Jensen's inequality 
$$
|\Delta_\tau x|^2(t)= |\tau S_\tau  x|^2(t)
\leq \tau^2 S_\tau (|x'|^2)(t)\leq \tau \int_t^{+\infty} |x'|^2
$$
for $S_\tau$ as in \eqref{Stau},
which is at most $\frac{C\tau}{t}$ for $0<t < 1$ and $C\tau e^{-t}$ for $t\geq 1$.

Applying 
Corollary \ref{techcor}
with $y=\dtx$, $h=A\dtx$, and $f= |\Delta_\tau x|^2 |x'(\cdot + \tau)|^2$, we get
\be\label{ineq}
\int_t^{+\infty} |\dtx|^2 \leq
\int_{t'}^{+\infty}|\Delta_\tau x|^2|x'(\cdot +\tau)|^2
+ \frac{ 4 \int_{t'}^t |\Delta_\tau x|^2}{(t-t')^2}.
\ee
Plugging in the estimates obtained 
just above 
for $|\Delta_\tau x|^2(t)$ and in step 2 for $\int_t^{+\infty}|x'|^2$, and letting
$t'=t/2$ for $0<t<1$ and $t'=t-1/2$ for $t\geq 1$, we get, noting that $\tau>0$ is arbitrary,
$$
\sup_{\tau>0}\int_t^{+\infty} \frac{ |\dtx|^2}{\tau} \leq
\begin{cases}
C t^{-2}, & 0<t<1,
\\ C e^{-t}, & t\geq 1.
\end{cases}
$$

By Lemma \ref{ldiff2}, Appendix \ref{s:sob}, this together with our previous estimate 
$$
\int_t^{+\infty} |x'|^2 \leq C/t
\quad \hbox{\rm for $0<t<1$}
$$
is enough to conclude
that $\int_t^{+\infty} |x'|^4 \leq   (1/2) (C/t^2(C/t)) =(1/2) (C^2/t^3)\lesssim t^{-3}$
for $0<t<1$. 
Similarly, using our previous estimate $\int_t^{+\infty} |x'|^2 \leq Ce^{-t}$ for $t\geq 1$, 
we obtain $\int_t^{+\infty} |x'|^4 \leq C^2e^{-2t}$ for $t\geq 1$.
Combining, we have for some (possibly larger) $C>0$
$$
\int_t^{+\infty} |x'|^4 \leq
\begin{cases}
C t^{-3}, & 0<t<1,\\
Ce^{-2t}, & t\geq 1.
\end{cases}
$$

\medskip

4. ({\it Proof that $x\in H^{2}_{loc}(0,+\infty)$}).
We shall use 
Corollary \ref{techcor}
%
once more with the same $y$, $f$, $h$, but this time we shall estimate the right-hand side 
of the inequality \eqref{ineq} in a different way.
First note that 
$$
\begin{aligned}
	|A(\dtx)|^2= |\Delta_\tau (Ax')|^2
&\leq \tau^2 S_\tau(\big|(d/dt)(Ax')\big|)^2\\
	&=
\tau^2 S_\tau(\big| -x' + D_x\cG(x')\big|)^2 \leq 4 \tau^2 S_\tau(|x'|)^2
\end{aligned}
$$
with $S_\tau$ as in \eqref{Stau}, so $\int_t^{+\infty} |A(\dtx)|^2\leq 4\tau^2\int_t^{+\infty} |x'|^2$.
Then observe that 
$$
\int_{t'}^{+\infty}|\Delta_\tau x|^2|x'(\cdot +\tau)|^2\leq 
\tau^2 \int_t^{+\infty} |S_\tau x'|^2|x'(\cdot +\tau)|^2
\leq \tau^2 \|x'\|_{L^4(t,+\infty)}^4.
$$

Thus, with the same choice $t'=t/2$ for $0<t<1$ and $t'=t-1/2$ for $t\geq 1$, we arrive at the bound
$$
\int_t^{+\infty} \frac{ |\dtx|^2}{\tau^2} \leq
\begin{cases}
C t^{-3}, & 0<t<1,\\ C e^{-t}, & t\geq 1.
\end{cases}
$$

The difference ratios $\frac{ \dtx}{\tau}$ are thus uniformly bounded in $L^2(t,+\infty)$ for any $t>0$, hence,
applying again Lemma \ref{ldiff}(i), Appendix \ref{s:sob}, we have
$x\in H^2_{loc}(0,+\infty)$ and
$$
\int_t^{+\infty} |x''|^2 \leq
\begin{cases}
C t^{-3}, & 0<t<1,\\ Ce^{-t}, & t\geq 1.
\end{cases}
$$
Also, 
$$
\begin{aligned}
	|x'(t)|^2\leq 2\Big|\int_t^{+\infty} \langle x',x''\rangle \Big| &\leq 2\Big(\int_t^{+\infty} |x'|^2\Big)^{1/2}
\Big(\int_t^{+\infty} |x''|^2\Big)^{1/2}\\
	& \leq
\begin{cases}
C t^{-2}, & 0<t<1,\\ Ce^{-t}, & t\geq 1,
\end{cases}
\end{aligned}
$$
verifying \eqref{est} for $k=1$.
Finally, applying again Lemma \ref{ldiff}(i)--(ii) and passing to the $L^2$-limit as $\tau\to 0^+$ on both sides of 
$\tau^{-1}$ times equation \eqref{L}, 
we get $Ax''\in H^1_{loc}(0,+\infty)$ and
\be\label{L2}
(d/dt)(Ax'')= -x'' + D^2_x\cG(x',x') + D_x\cG(x'').
\ee

Namely, we first observe that $x'$ and $(d/dt)\mathcal{G}(x)= D_x\mathcal{G}(x')$ are both contained in 
$H^1_{loc}(0,+\infty)$,
by the fact that $x'\in H^1_{loc}\cap L^\infty_{loc}(0,+\infty)$ together with the uniform derivative bounds
\eqref{gprop} on $\mathcal{G}$.
Thus, we may argue as in the end of step 2 to obtain first convergence in $L^2(t,+\infty)$ of $\tau^{-1}$ times
the right-hand side of \eqref{L} by Lemma \ref{ldiff}(ii), to 
$-x'' +(d/dt)D_x\mathcal{G}(x')= -x''+ D^2_x\cG(x',x') + D_x\cG(x'')$, for any $t>0$.
This implies convergence in $L^2(t,+\infty)$ of $\tau^{-1}$ times the left-hand side of \eqref{L}, or
$$
(d/dt)(\tau^{-1}\Delta_\tau x')= \tau^{-1}\Delta_\tau A x'',
$$
whence $Ax''\in H^1_{loc}(0,+\infty)$ 
by Lemma \ref{ldiff}(i).  
Applying Lemma \ref{ldiff}(ii), we obtain finally that $ \lim_{\tau\to 0^+}\tau^{-1}\Delta_\tau A x''= (d/dt)Ax''$,
yielding \eqref{L2} by equality of left- and right-hand limits.

\medskip

5. ({\it Proof that $x\in H^{J}_{loc}(0,+\infty)$, $J\geq 3$}). The rest of the argument we carry out by induction.
Specifically, at each level $J$, starting with $J=2$, we make the following induction hypotheses.

\medskip

(I1) For $0\leq k < J$, $x\in W_{loc}^{\infty, k}(0,+\infty)$, with $(d/dt)^k x$ satisfying
\eqref{est} for $t>0$.

(I2) For $1\leq k\leq J$, $x\in H^k_{loc}(0,+\infty)$, with 
\be\label{Hest}
\int_t^{+\infty} |(d/dt)^k x|^2  \leq
\begin{cases}
Ct^{-2k  +1}, & 0<t<1,\\
Ce^{-t}, &  t\geq 1.
\end{cases}
\ee

(I3) For $1\leq k\leq J$, $A(d/dt)^{k}x  \in H^1_{loc}(0,+\infty)$ and
$x$, $\mathcal{G}(x) \in H^k_{loc}(0,+\infty)$, with
\be\label{keq}
(d/dt) A (d/dt)^{k} x= - (d/dt)^k x + (d/dt)^k \mathcal{G}(x) \quad \hbox{\rm in $L^2_{loc}(0,+\infty)$.}
\ee

\medskip
We have shown in Step 4 that (I1)--(I3) are satisfied for $J=2$, i.e., $x\in W^{\infty, 1}_{loc}(0,+\infty)$
and \eqref{est} holds for $k=0, 1$; $x\in H^2_{loc}(0,+\infty)$, and
\eqref{Hest} holds for $k=1,2$; 
and $x$, $\mathcal{G}(x)$ are in $H^2_{loc}(0,+\infty)$, and $A(d/dt)^2 x$ is in $H^1_{loc}(0,+\infty)$, 
satisfying \eqref{keq} in the $L^2_{loc}(0,+\infty)$ sense for $k=0,1,2$.
We now show that satisfaction of (I1)--(I3) at level $J=j\geq 2$ implies satisfaction of (I1)--(I3)
at level $J=j+1$, whence, by induction, (I1)--(I3) hold for all $J\geq 2$.
This implies that $x$ is $C^\infty$ on $(0,+\infty)$ and satisfies \eqref{est} for all $k\geq 0$, completing the proof.
\medskip

By (I3), we have that $(d/dt) A\Delta_\tau (d/dt)^{j}x$, $\Delta_\tau (d/dt)^j x$, 
and $\Delta_\tau (d/dt)^{j}\mathcal{G}(x)$ are in $L^2_{loc}(0,+\infty)$, with
\be\label{A}
(d/dt) A \Delta_\tau (d/dt)^{j} x= - \Delta_\tau (d/dt)^j x +\Delta_\tau  (d/dt)^j \mathcal{G}(x).
\ee
Repeated application of the chain rule gives the expansion
\be\label{expansion}
(d/dt)^j \mathcal{G}(x)= \sum_{l=1}^{j}\sum_{s_1+ \cdots + s_l=j-l, \; s_j\geq 0} C^{j}_{l,s} \,
D_x^l \mathcal{G} \big((d/dt)^{s_1+1} x, \dots, (d/dt)^{s_l+1}x \big),
\ee
with $C^j_{l,s}$ denoting the number of occurences
of the derivative distribution $s=(s_1,\dots, s_l)$.
In particular, there is only one term for $l=1$, namely $C^j_{1,(j)}=1$.
Thus, we have
\ba\label{gform}
\Delta_\tau (d/dt)^j \mathcal{G}(x)&= 
\Delta_\tau \sum_{l=2}^{j}\sum_{s_1+ \cdots + s_l=j-l, \; s_j\geq 0} C^{j}_{l,s} \,
D_x^l \mathcal{G} \big((d/dt)^{s_1+1} x, \dots, (d/dt)^{s_l+1}x \big) \\
&\quad +  
(\Delta_\tau D_x\mathcal{G})  ((d/dt)^{j} x) + 
D_{x(\cdot +\tau)} \mathcal{G} (\Delta_\tau (d/dt)^{j} x).\\
\ea

Noting that 
\be\label{deltabd}
|D_{x(\cdot+\tau)} \mathcal{G} 
(\Delta_\tau (d/dt)^{j} x)| \leq (1/4)|\Delta_\tau (d/dt)^{j} x)|
\ee
by \eqref{gprop}, and
arguing as in step 3, we find therefore for $y=\Delta_\tau (d/dt)^j x$ and $h=A\Delta_\tau (d/dt)^j x$ that
$$
\langle y,h\rangle'\leq -|y|^2 + f,
$$
where $f=2|g|^2$ with $ g:= \Delta_\tau (d/dt)^{j} \mathcal{G}(x)- 
D_{x(\cdot+\tau)}\mathcal{G} (\Delta_\tau (d/dt)^{j} x))$.

With an eye toward applying Corollary \ref{techcor}, we first bound 
$\int_t^{+\infty} |f|=2 \int_t^{+\infty} |g|^2$. 
Evidently, $g$ is given by the sum of the first two terms on the right-hand side of \eqref{gform}.  The first, $\sum_{\ell=2}^j$ term,
involves only derivatives $(d/dt)^{s_i+1} x$ of order $<j$, since there are at least two summands $s_i+1$, and the sum of all such is $j$. 
Thus, we may use the relation $\Delta_\tau=\tau S_\tau (d/dt)$ to rewrite this term as
\be\label{Bsum}
\tau S_\tau \sum_{l=2}^{j+1}\sum_{s_1+ \cdots + s_l=j+1-l, \; s_j\geq 0} B^{j}_{l,s} \,
D_x^l \mathcal{G} \big((d/dt)^{s_1+1} x, \dots, (d/dt)^{s_l+1}x \big) 
\ee
with $B^{j}_{l,s}$ integer valued, 
where, in each summand, at most one of the derivatives $(d/dt)^{s_i+1}x$ is of order $j$, and the rest are of order $<j$.
Here, we are using in an important way the fact that $j\geq 2$. For $j=1$, the multi-index $s=(1,1)$ gives two derivatives of highest order $j$, 
a fact that cost some extra effort in steps 3--4.

By (I2)--(I3), therefore, the highest-order derivative appearing in each summand is bounded in $L^2(t,+\infty)$ by $Ct^{-(s_i+1)+1/2}$ for $0<t<1$ and $Ce^{-t/2}$ for $t\geq 1$, 
and the remaining derivatives are bounded in $L^\infty(t,+\infty)$ by $Ct^{-(s_i+1)}$ 
for $0<t<1$ and $Ce^{-t/2}$ for $t\geq 1$.
Combining these bounds with the uniform derivative bounds \eqref{gprop} on $\mathcal{G}$
and the fact that $S_\tau$ is bounded from $L^2(t,+\infty)$ to itself, 
we thus find that the $L^2(t,+\infty)$ norm of each summand in \eqref{Bsum} is bounded by
$$
\tau t^{-(\sum_{i=1}^l (s_i+1)) +1/2} =\tau t^{-(j+1) +1/2}
$$
for $0<t<1$ and $\tau e^{-t/2}$ for $t\geq 1$.

Likewise, in the remaining term 
$\big(\Delta_\tau D_x\mathcal{G}\big)((d/dt)^{j} x)$ of $g$, 
operator 
$$
\Delta_\tau D_x\mathcal{G}=\tau S_\tau (d/dt)D_x \mathcal{G}= \tau S_\tau D^2_x \mathcal{G}(x', \cdot)
$$
involves only derivatives of $x$ of order $1<j$, hence its operator norm may be estimated using the
$L^\infty(t,+\infty)$ bound \eqref{est} of induction hypothesis (I1), the bounds \eqref{gprop}, and boundedness 
in operator norm of the averaging operator $S_\tau$ as $ \|\Delta_\tau D_x\mathcal{G}\| \lesssim \tau t^{-1} $
for $0<t<1$ and $\lesssim \tau e^{-t/2}$ for $t\geq 1$.
Together with bounds 
$$\|(d/dt)^j)x\|_{L^2(t,+\infty)}\lesssim t^{-j+1/2} \quad \hbox{\rm for $0<t<1$}
$$
and
$\|(d/dt)^j)x\|_{L^2(t,+\infty)}\lesssim e^{-t}$ for $t\geq 1$
of induction hypothesis (I2), this gives 
\be\label{E}
\|\big(\Delta_\tau D_x\mathcal{G}\big)((d/dt)^{j} x)\|_{L^2(t,+\infty)} \lesssim 
\begin{cases} \tau t^{- 1}t^{-j+1/2} = \tau   t^{-(j+1)+1/2} 
	& \hbox{\rm for $0<t<1$,}\\
	\tau e^{-t/2} & \hbox{\rm  for $t\geq 1$}.
\end{cases}
\ee

Combining the above estimates, we obtain finally
\be\label{gbd}
\|g \|_{L^2(t,+\infty)} \lesssim 
\begin{cases} \tau t^{- 1}t^{-j+1/2} = \tau   t^{-(j+1)+1/2} 
	& \hbox{\rm for $0<t<1$ and }\\
	\tau e^{-t/2} & \hbox{\rm  for $t\geq 1$},
\end{cases}
\ee
and thus
$$
\int_t^{+\infty}|f| = 2 \int_t^{+\infty}|g|^2 \lesssim
\begin{cases}
	\tau^2  t^{-2(j+1)+1}, & 0<t<1,\\
	\tau^2 e^{-t}, & t\geq 1.
\end{cases}
$$

We next bound $\int_t^{+\infty} |h|^2$. 
Arguing as in previous steps, we have 
$$
h=A \Delta_\tau (d/dt)^jx = \tau S_\tau (d/dt) A(d/dt)^{j}x
= \tau S_\tau \big( -(d/dt)^j x + (d/dt)^j \mathcal{G}(x)\big).
$$
Expanding $(d/dt)^j\mathcal{G}(x)$ as in \eqref{expansion}
and noting that there appears in each term at most one derivative of $x$ of order $j$ and
the rest of order $<j$, we find that the $L^2(t,+\infty)$ norm of $(d/dt)^j\mathcal{G}(x)$ can be bounded using induction 
hypotheses (I1)--(I2)  by $Ct^{-j+1/2}$ for $0<t<1$ and $Ce^{-t/2}$ for $t\geq 1$.
Likewise, the $L^2(t,+\infty)$ norm of $(d/dt)^j x$ is bounded, by the induction hypothesis, by $Ct^{-j+1/2}$ for $0<t<1$ and $Ce^{-t/2}$ for $t\geq 1$.
Thus, noting again the harmless effect of averaging operator $S_\tau$, we obtain
$$
\int_{t}^{+\infty}|h|^2 \lesssim
\begin{cases}
	\tau^2  t^{-2j+1}, & 0<t<1,\\
	\tau^2 e^{-t}, & t\geq 1.
\end{cases}
$$

Applying Corollary \ref{techcor}
with $t'=t/2$ for $0<t<1$ and $t'=t-1/2$ for $t\geq 1$, we obtain therefore 
$
\int_t^{+\infty}|\Delta_\tau (d/dt)^j x|^2 \lesssim \tau^2 t^{-2(j+1)+1} 
$
for $0<t<1$ and $\lesssim  \tau^2 e^{-t}$ for $t\geq 1$, hence, by Lemma \ref{ldiff}(i),
$
\int_t^{+\infty}| (d/dt)^{j+1} x|^2 \lesssim   t^{-2(j+1)+1}
$
for $0<t<1$ and $\lesssim e^{-t}$ for $t\geq 1$, 
giving $x\in H^{j+1}_{loc}(0,+\infty)$ and verifying the $H^{k}$ bound of induction hypothesis (I2) for $k=j+1$.
This verifies (I2) for $J=j+1$.

Moreover, applying Lemma \ref{ldiff}(i)--(ii) as in the last parts of Steps 2 and 4, 
we obtain in the $L^2(t,+\infty)$ limit as $\tau\to 0^+$ of
\be\label{finaleq}
(d/dt) A \tau^{-1}\Delta_\tau   (d/dt)^{j} x= - \tau^{-1}\Delta_\tau (d/dt)^j x +\tau^{-1} \Delta_\tau (d/dt)^j \mathcal{G}(x)
\ee
that 
$A(d/dt)^{j+1} x\in H^1_{loc}(0,+\infty)$
and  $x$, $\mathcal{G}(x)\in H^{j+1}_{loc}(0,+\infty)$, with
\be\label{finaltarget}
(d/dt)A(d/dt)^{j+1} x= -(d/dt)^{j+1}x + (d/dt)^{j+1}\mathcal{G}(x),
\ee
verifying induction hypothesis (I3) at level $J=j+1$.

Specifically, recalling that $\Delta_\tau (d/dt)^j \mathcal{G}(x)= g + D_{x(\cdot+\tau)}\mathcal{G} (\Delta_\tau (d/dt)^{j} x))$,
we have from \eqref{deltabd}, \eqref{gbd}, and the previously-obtained bounds on $y=\Delta_\tau (d/dt)^{j} x))$
together with boundedness of $\mathcal{G}_x$ that
$$
\|\Delta_\tau (d/dt)^j \mathcal{G}(x)\|_{L^2_{loc}(t,+\infty)}\lesssim
\begin{cases} \tau t^{- 1}t^{-j+1/2} = \tau   t^{-(j+1)+1/2} 
	& \hbox{\rm for $0<t<1$ and }\\
	\tau e^{-t/2} & \hbox{\rm  for $t\geq 1$},
\end{cases}
$$
giving $\mathcal{G}(x)\in H^{j+1}_{loc}$, by Lemma \ref{ldiff}(i), and therefore
$$
\lim_{\tau\to 0^+} \tau^{-1} \Delta_\tau (d/dt)^j \mathcal{G}(x)= (d/dt)^{j+1}\mathcal{G}
$$
by Lemma \ref{ldiff}(ii).
By the fact $x\in H^{j+1}_{loc}$ verified in the previous step, we have by Lemma \ref{ldiff}(ii) also 
$$
\lim_{\tau\to 0^+} \tau^{-1} \Delta_\tau  (d/dt)^j x = -(d/dt)^{j+1}x .
$$
Thus, the right-hand side of \eqref{finaleq} converges to
$-(d/dt)^{j+1}x + (d/dt)^{j+1}\mathcal{G}(x)$ as $\tau\to 0^+$.

Using $x\in H^{j+1}_{loc}$ to re-express the left-hand side of \eqref{finaleq} as
$  \tau^{-1}\Delta_\tau (A   (d/dt)^{j+1} x)$, we obtain by 
Lemma \ref{ldiff}(i) that $  A   (d/dt)^{j+1} x \in H^1_{loc}$, and thus by 
Lemma \ref{ldiff}(ii), the left-hand side of 
\eqref{finaleq} converges to $(d/dt)A   (d/dt)^{j+1} x$.
Comparing the limits of left- and right-hand sides of \eqref{finaleq} then yields \eqref{finaltarget},
completing the verification of (I3) for $J=j+1$.

Finally, from our $H^{j+1}$ and $H^j$ bounds, we obtain 
the $L^\infty(t,+\infty)$ estimate \eqref{est} for $k=j$ by the one-dimensional Sobolev embedding estimate
$$
\begin{aligned}
	\|(d/dt)^j x\|_{L^\infty(t,+\infty)}^4 & \leq 
	\|(d/dt)^jx\|_{L^2(t,+\infty)}^2 \|(d/dt)^{j+1} x\|_{L^2(t,+\infty)}^2\\
	&\lesssim t^{-2j+1} t^{-2(j+1)+1}=t^{-4j}
\end{aligned}
$$
for $0<t<1$ and $\lesssim e^{-2t}$ for $t\geq 1$.
This verifies induction hypothesis (I1) for $J=j+1$, completing the induction and the proof.
		\end{proof}

\br\label{linrmk}
The instantaneous smoothing result \eqref{est} of Theorem \ref{mainthm} can be motivated by 
the canonical example of linear diagonal flow $Ax'=-x$ on $\bH=\ell^2$.
Writing $x=\{x_j\}$, with $\sum_j |x_j|^2<\infty$, take $Ax=\{a_j x_j\}$, with $a_j>0$ and
$a_j \to 0$ as $j\to \infty$.
Then, for $|x(0)|=1$, 
\be\label{linest}
|(d/dt)^k x(t)|\leq \sup_j a_j^{-k} e^{-a_j^{-1}t} |x(0)| \lesssim t^{-k},
\ee
in agreement with \eqref{est}.
Noting that $\max_{z\in \R^+} z^{-k} e^{-z^{-1}t} = e^{-1} t^{-k}$ is attained at $z=t$, 
we find by considering initial data of form $e_n=(0, \dots,0,  1, 0, \dots)$ with $n\to \infty$, 
and evaluating at $t=a_n$,
that \eqref{linest}, hence also \eqref{est},
is sharp in the sense that there is no better uniform bound as $t\to 0^+$.
\er

\section{Linear existence theory: weak vs. mild solutions}\label{s:connect}
We next compare our notion of ``weak $L^2_{loc}$'' solution with 
that of ``mild'' solution defined in \cite{PZ1}, 
in the process establishing linear existence and uniqueness of weak $L^2_{loc}$ solutions.
We show, first, that mild solutions are solutions in our sense as well,
hence subject to the smoothing results of section \ref{s:smooth}, and, second, that solutions $x\in L^2(t_0,t_1)$ 
in our sense are mild solutions in the sense of \cite{PZ1}
precisely if $\lim_{t\to \tau_0}|A|^{1/2}x(t) $ and
$\lim_{t\to \tau_1}|A|^{1/2}x(t) $ lie in $\Range \, |A|^{1/2}$.
Here, $|A|^{1/2}$ as we now describe is defined via the spectral decomposition formula 
for bounded self-adjoint operators.

Recall \cite{R} that a bounded self-adjoint operator $A$ on $\bH$ admits a spectral decomposition
\be\label{sd}
A = \int_\R \alpha dE_\alpha , \quad \Id = \int_{\R} dE_\alpha, 
\quad 
\langle x,y\rangle= \int_{\R} \langle x, dE_\alpha y\rangle,
\ee
where $dE_\alpha$ is a projection-valued measure.
Following the standard operator calculus, we define
$\sgn(A):= \int_\R \sgn(\alpha) dE_\alpha$,
$|A|= \sgn(A)A:= \int_\R |\alpha| dE_\alpha$, and
$$
\hbox{\rm $|A|^{r}:= \int_\R |\alpha|^{r} dE_\alpha$ for real $r>0$.}
$$

\subsection{The linear boundary-value problem and prescription of data for weak $L^2_{loc}$ solutions}\label{s:weaksoln}
The comparison of weak and mild solutions hinges ultimately on the question, of interest in its own right,
of how or in what sense boundary data is attained for weak solutions that are merely $L^2_{loc}$.

Formalizing the discussion of the introduction, we make the following definition.

\begin{definition}\label{d:weak}
	For $f\in L^2_{loc}$ on a given domain $D$, a \emph{weak $L^2_{loc}$ solution} of the linear
	inhomogeneous equation 
	\be\label{lin}
	(d/dt)(Ax)+ x=f
	\ee
	is a function $x(t)$ such that $x\in L^2_{loc}(D)$,  $Ax\in H^1_{loc}(D)$, and 
	\eqref{lin} holds on $L^2_{loc}(D)$.
\end{definition}

We begin by observing that a weak $L^2_{loc}$ solution,
or indeed any function with $x\in L^2_{loc}$
and $Ax\in H^1_{loc}$, admits a representative for which $|A|^{1/2}x$ is continuous.
Moreover, if $x, (d/dt)(Ax)\in L^2(\tau_0, \tau_1)$, then this representative
extends continuously to $[\tau_0,\tau_1]$.  
Thus, for solutions on $(t_0, t_1)$,
we may speak of the {\it boundary values of $|A|^{1/2}x$} at $t=\tau_0, \tau_1$.

\bl[Extension to $\R$]\label{extensionlem}
Let $x, (d/dt)(Ax) \in L^2(\tau_0, \tau_1)$, Then, there exists an extension $\widetilde x$ of $x$
to the whole line such that 
$$
\|\widetilde x\|_{L^2(\R)} + \|(d/dt)(A\widetilde x)\|_{L^2(\R)}\lesssim 
\|x\|_{L^2(\tau_0, \tau_1)} + \|(d/dt)(Ax)\|_{L^2(\tau_0, \tau_1)}.  
$$
\el

\begin{proof}
	We define $\widetilde x$ as the even reflection of $x$ across boundaries $t=\tau_0$ and $t=\tau_1$,
	supported on $[\tau_0-d, \tau_1+d]$, with $d=\tau_1-\tau_0$, that is:
	$$
	\widetilde x(t)=\begin{cases}
		x(\tau_0 + (\tau_0-t)) ,  & t\in [\tau_0-d,\tau_0]\\
		x(t),  & t\in [\tau_0,\tau_1]\\
		x(\tau_1- (t-\tau_1)) ,  & t\in [\tau_1, \tau_1+d].
		\end{cases}
	$$
	Clearly $\widetilde x$ is in $L^2$, with $L^2$ norm bounded by three times that of $x$.

	We show next that $A\widetilde x$ has a weak derivative $\widetilde y=(d/dt)(A\widetilde x)$ equal to the odd reflection
	of $y:=(d/dt)(A\widetilde x)$ across boundaries $t=\tau_0$ and $t=\tau_1$,
	likewise with $L^2$ norm bounded by three times that of $y$ on $(\tau_0,\tau_1)$:
	$$
	\widetilde y(t)=\begin{cases}
		-y(\tau_0 + (\tau_0-t)) ,  & t\in [\tau_0-d,\tau_0]\\
		y(t),  & t\in [\tau_0,\tau_1]\\
		-y(\tau_1- (t-\tau_1)) ,  & t\in [\tau_1, \tau_1+d],
		\end{cases}
	$$
	That is, we claim \cite{E} that 
	\be\label{weakdef}
	\int \langle (d/dt)\phi, A\widetilde x\rangle= -\int \langle \phi, \widetilde y\rangle
	\ee
	for all test functions $\phi\in C^\infty_0(\tau_0-d,\tau_1+d)$.

	For test functions $\phi$ that are even reflections about $t=\tau_1$ and supported in $(\tau_0, \tau_1+d)$,
	this follows because 
	$\langle (d/dt)\phi, A\widetilde x\rangle$ and $\langle \phi, y\rangle$  are both odd about $t=\tau_1$ as
	inner products of even and odd functions, hence have integral zero.
	For test functions $\phi\in C^\infty_0(\tau_0, \tau_1)$, it follows because the restrictions of 
	$\widetilde y$ and $\widetilde x$ to $(\tau_0,\tau_1)$ are $y=(d/dt)(Ax)$ and $x$. 
	Because test functions $\phi\in H^1_0(\tau_0, \tau_1)$ may be uniformly
	approximated in $H^1(\R)$ by test functions in $C^\infty_0(\tau_0,\tau_1)$, \eqref{weakdef} follows also
	for test functions $\phi\in H^1_0(\tau_0, \tau_1)$.
	By reflection, we find that \eqref{weakdef} holds also for test functions $\phi$ in 
	$H^1_0(\tau_0-d, \tau_0)$ or $H^1_0(\tau_1, \tau_1+d)$.


	But, any test function in $C^\infty_0 (\tau_0-d, \tau_1+d)$ may be decomposed into the sum of 	
	test functions that are even around $\tau_0$ and supported in $(\tau_0-d, \tau_1)$, even around
	$\tau_1$ and supported in $(\tau_0, \tau_1+d)$, plus test functions in $
	H^1_0(\tau_0-d, \tau_0)$, $H^1_0(\tau_0, \tau_1)$, and $H^1_0(\tau_1, \tau_1+d)$,
	whence \eqref{weakdef} holds by linear superposition for arbitrary $\phi\in C^\infty(\R)$ supported
	on $(\tau_0-d,\tau_1+d)$.
	Finally, we may multiply $\widetilde x$ by a smooth bump function that is identically equal to one
	on $[\tau_0, \tau_1]$
	and identically equal to zero outside $(\tau_0-d, \tau_1+d)$ 
	to obtain an extension of $x$ defined on all of $\R$ and satisfying the same bounds.
\end{proof}

\begin{corollary}\label{soblem}
If $x, (d/dt)(Ax) \in L^2(\tau_0,\tau_1)$, then $|A|^{1/2}x$ 
may be taken to be in 
$C^0[\tau_0, \tau_1]$, with 
	$$ \| |A|^{1/2}x\|_{C^0[\tau_0, \tau_1]}\lesssim  
	\|x\|_{L^2(\tau_0, \tau_1)} + \|(d/dt)(Ax)\|_{L^2(\tau_0, \tau_1)}.  $$
In particular, $|A|^{1/2}x$ may be taken to be continuous wherever $x$, $(d/dt)(Ax)\in L^2_{loc}$.
\end{corollary}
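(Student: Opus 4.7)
The plan is to reduce via Lemma~\ref{extensionlem} to a problem on all of $\mathbb{R}$ with compactly supported data, mollify to obtain smooth approximants, and then establish a uniform-in-$t$ Cauchy estimate for $|A|^{1/2}\widetilde{x}_\epsilon$ by applying Lemma~\ref{2.1lem} with the operator $|A|$ in place of $A$.

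First I invoke Lemma~\ref{extensionlem} to produce a compactly supported extension $\widetilde{x}:\mathbb{R}\to\bH$ with
\[
\|\widetilde{x}\|_{L^2(\mathbb{R})}+\|(d/dt)(A\widetilde{x})\|_{L^2(\mathbb{R})}\lesssim \|x\|_{L^2(\tau_0,\tau_1)}+\|(d/dt)(Ax)\|_{L^2(\tau_0,\tau_1)}.
\]
Then, for a scalar mollifier $\rho_\epsilon$, I set $\widetilde{x}_\epsilon:=\rho_\epsilon*\widetilde{x}$. This is smooth and compactly supported, with $\widetilde{x}_\epsilon\to\widetilde{x}$ and $(d/dt)(A\widetilde{x}_\epsilon)=\rho_\epsilon*(d/dt)(A\widetilde{x})\to(d/dt)(A\widetilde{x})$ in $L^2(\mathbb{R})$; since $|A|^{1/2}$ is bounded, each $|A|^{1/2}\widetilde{x}_\epsilon$ is itself smooth, continuous, and compactly supported.

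The key step applies Lemma~\ref{2.1lem} with the bounded self-adjoint operator $|A|=\sgn(A)A$: since $|A|\widetilde{x}_\epsilon=\sgn(A)(A\widetilde{x}_\epsilon)\in H^1(\mathbb{R})$, one obtains
\[
(d/dt)\bigl\||A|^{1/2}\widetilde{x}_\epsilon\bigr\|_\bH^2 = 2\langle \sgn(A)(d/dt)(A\widetilde{x}_\epsilon),\widetilde{x}_\epsilon\rangle
\]
almost everywhere. Choosing a base point $s$ outside the (bounded) support of $\widetilde{x}_\epsilon$ kills the boundary term, and Cauchy-Schwarz then yields, uniformly in $t\in\mathbb{R}$,
\[
\bigl\||A|^{1/2}\widetilde{x}_\epsilon(t)\bigr\|_\bH^2\leq 2\|(d/dt)(A\widetilde{x}_\epsilon)\|_{L^2(\mathbb{R})}\|\widetilde{x}_\epsilon\|_{L^2(\mathbb{R})}.
\]
Applied to the difference $\widetilde{x}_\epsilon-\widetilde{x}_\delta$, the same identity gives
\[
\sup_{t\in\mathbb{R}}\bigl\||A|^{1/2}(\widetilde{x}_\epsilon-\widetilde{x}_\delta)(t)\bigr\|_\bH^2\leq 2\|(d/dt)(A(\widetilde{x}_\epsilon-\widetilde{x}_\delta))\|_{L^2}\|\widetilde{x}_\epsilon-\widetilde{x}_\delta\|_{L^2}\to 0
\]
as $\epsilon,\delta\to 0$, so $\{|A|^{1/2}\widetilde{x}_\epsilon\}$ is Cauchy in $L^\infty(\mathbb{R};\bH)$, converging uniformly to a continuous limit $y$.

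Since also $|A|^{1/2}\widetilde{x}_\epsilon\to |A|^{1/2}\widetilde{x}$ in $L^2(\mathbb{R};\bH)$ by boundedness of $|A|^{1/2}$, the uniform limit $y$ agrees with $|A|^{1/2}\widetilde{x}$ almost everywhere and thus furnishes the desired continuous representative on $[\tau_0,\tau_1]$. The quantitative bound follows by passing to the limit $\epsilon\to 0$ in the pointwise estimate above, applying Young's inequality $2ab\leq a^2+b^2$, and invoking the extension bound of Lemma~\ref{extensionlem}. The main subtlety is securing the Cauchy property uniformly in $t\in\mathbb{R}$; this is precisely what the compact-support reduction combined with the vanishing base point provides, allowing one to avoid any direct use of pointwise values of the merely $L^2$ function $\widetilde{x}$.
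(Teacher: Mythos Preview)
Your proof is correct and follows essentially the same approach as the paper: extend via Lemma~\ref{extensionlem}, use the key identity of Lemma~\ref{2.1lem} applied to $|A|$ in place of $A$ to get the $L^\infty$ bound by Cauchy--Schwarz, and mollify to upgrade to continuity via a Cauchy-in-$L^\infty$ argument. The only cosmetic difference is ordering---the paper first applies the quadratic-form identity directly to the extension $\widetilde{x}$ to obtain the bound \eqref{linftybd} and then mollifies for continuity, whereas you mollify first and derive both the bound and continuity from the smooth approximants---but the ingredients are identical.
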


\begin{proof}
	Observing that the bounded linear operator $\sgn(A)$ commutes with weak differentiation, and applying
	Lemma \ref{extensionlem}, we may assume
	without loss of generality that $x, (d/dt)(|A|x)\in L^2(\R)$, with 
	$\|x\|_{L^2(\R)}\lesssim \|x\|_{L^2(\tau_0, \tau_1)}$ and 
	$$
	\|(d/dt)(|A|x)\|_{L^2(\R)}\lesssim 
	\|x\|_{L^2(\tau_0, \tau_1)} + \|(d/dt)(Ax)\|_{L^2(\tau_0, \tau_1)}.  
	$$

	Arguing as in Section \ref{s:smooth}, we have that 
		$ \langle |A|^{1/2}x, |A|^{1/2}x\rangle= \langle |A|x,x\rangle $
		is absolutely continuous and in $L^2(\R)$, with derivative  $2\langle (d/dt)(|A|x),x\rangle$, whence
		$$
		\langle |A|^{1/2}x, |A|^{1/2}x\rangle(t)= 2 \int_{-\infty}^t \langle (d/dt)(|A|x),x\rangle d\tau.  
		$$
Bounding $ \int_{-\infty}^t \langle (d/dt)(|A|x),x\rangle d\tau \leq \|(d/dt)(|A|x)\|_{L^2(\R)} \|x\|_{L^2(\R)} $
		 by Cauchy-Schwarz, and applying Young's inequality, we obtain 
	\ba\label{linftybd}
	\| |A|^{1/2}x\|_{L^\infty[\tau_0, \tau_1]}&\lesssim \|x\|_{L^2(\R)} + \|(d/dt)(Ax)\|_{L^2(\R)} \\
	& \lesssim \|x\|_{L^2(\tau_0, \tau_1)} + \|(d/dt)(Ax)\|_{L^2(\tau_0, \tau_1)}.
	\ea

	Continuity of $|A|^{1/2}x$ follows, finally, by a standard mollification argument, approximating $x$
	by $x^\eps:= x* \eta^\eps$, where $\eta^\eps(t)=\eps^{-1} \eta(t/\eps)$ is a smooth mollification kernel,
	$\eta\geq 0$ a $C^\infty$ bump function equal to $1$ for $|t|\leq 1/4$ and $0$ for $|t|\geq 1$, 
	with $\int_\R \eta(t)dt=1$.
	Noting that
	$x^\eps \to x$ and $(d/dt)(|A|x^\eps)\to (d/dt)(|A|x)$ in $L^2$ \cite{E}, 
	we find by \eqref{linftybd} applied to $|A|^{1/2}x^\eps$ that 
	$$
	\begin{aligned}
		\||A|^{1/2}x^{\eps_1}- |A|^{1/2}x^{\eps_2}\|_{L^\infty[\tau_0,\tau_1]}&\leq
	\|x^{\eps_1}-x^{\eps_2}\|_{L^2(\R)}\\
		&\quad + \|(d/dt)(Ax^{\eps_1})-(d/dt)(Ax^{\eps_2})\|_{L^2(\R)} \to 0
	\end{aligned}
	$$
	as $\eps_1,\eps_2\to 0$.
	Thus, the sequence $\{|A|^{1/2}x^\eps\}$
	is Cauchy in $L^\infty[\tau_0,\tau_1]$, whence $|A|^{1/2}x$ is a uniform limit
	of the continuous (indeed $C^\infty$) functions $|A|^{1/2}x^\eps$ as $\eps\to 0$, and thus continuous.
\end{proof}

We next recall two fundamental resolvent estimates from \cite{PZ1}.
In the remainder of this section, in order to use Fourier transform techniques,
we complexify the real Hilbert space $\bH$ in the standard way \cite[Ch. 1, Ex. 1.7]{C}.
as $\bH+ i\bH$ with inner product 
$$
\langle g_1+ig_2,f_1+if_2\rangle := \big(\langle g_1,f_1\rangle + \langle g_2,f_2\rangle\big)
+i 
\big(\langle g_1,f_2\rangle - \langle g_2,f_1\rangle\big).
$$

\bl[cf. Lemma 3.4, \cite{PZ1}]\label{estlem}
The Fourier symbol $(i\omega A+\Id)$, $\omega\in \R$ of $(A (d/dt) +\Id)$ satisfies
\be\label{mult}
\sup_{\omega \in \R} \|(i\omega A + \Id)^{-1}\| \; \leq 1, \quad
\sup_{\omega \in \R} \|i\omega A (i\omega A + \Id)^{-1}\| \; \leq 2.
\ee
\el

\begin{proof}
The first inequality follows by symmetry of $A$, which implies that the symmetric part of $(i\omega A+\Id)$ is just $\Id$, hence bounded below by $1$.
The second one then follows by resolvent identity  
$$
i\omega A(i\omega A + \Id)^{-1}= \Id - (i\omega A + \Id)^{-1}.
$$
\end{proof}

From \eqref{mult} and Parseval's identity, we find for $f\in L^2(\R)$ that $x$ defined by 
\be\label{sol}
\widehat x(\omega):=(i\omega A+\Id)^{-1}\widehat f(\omega)
\ee
$\widehat g$ denoting Fourier transform of $g$, 
gives a unique solution $x, (d/dt)(Ax) \in L^2(\R)$ of \eqref{lin} for $f\in L^2(\R)$, that is, a 
weak $L^2_{loc}$ solution in the sense of Definition \ref{d:weak} for the linear inhomogeneous problem on the line.

Now, define stable, center, and unstable projections 
$$
\begin{aligned}
	\Pi_s &:= \chi_{(0,+\infty)}(A)=\int_\R \chi_{\alpha>0}\, dE_\alpha,\;\\
	\Pi_c &:= \chi_{{0}}(A)=\int_\R \chi_{\alpha= 0}\, dE_\alpha,\;\\
	\Pi_u &:= \chi_{(-\infty,0)}(A)=\int_\R \chi_{\alpha< 0}\, dE_\alpha
\end{aligned}
$$
of $-A$, where $\chi_I$ denotes indicator function associated with set $I$, and stable, center, and ustable subspaces
$\Sigma_s=\Range \, \Pi_s$, $\Sigma_c=\Range \, \Pi_c$, and $\Sigma_u=\Range \, \Pi_u$.

It is straightforward to see that the operator-valued functions
\ba\label{solnop}
T_s(t) &:=  \int_\R \chi_{\alpha>0}\, e^{-t/\alpha} dE_\alpha, \quad t\geq 0 \\
T_u(t) &:=  \int_\R \chi_{\alpha<0}\, e^{-t/\alpha} dE_\alpha, \quad t\leq 0,
\ea
corresponding formally with $e^{-tA^{-1}}\Pi_s$ and $e^{-tA^{-1}}\Pi_u$, are strongly continuous
with $T_s(0)=\Id_{\Sigma_s}$ and $T_u(0)=\Id_{\Sigma_u}$,\footnote{
	Recall \cite{Pa} that strong continuity of an operator $T(t)$ on $\bH$ is defined as continuity of $T(t)x$ 
for each fixed $x\in \bH$.}
and for $h_s\in \Sigma_s$ and $h_u\in \Sigma_u$
generate solutions $x_s(t):=T_s(t)h_s$ and $x_u(t):=T_u(t)h_u$
of the homogeneous equation $( (d/dt)A +\Id)x=0$ in forward and backward
time, respectively, via $x_s(t):=T_s(t)h_s$ and $x_u(t):=T_s(t)h_u$.
For,
$$
\begin{aligned}
| (T_s(t+\delta)-T_s(t))h|^2 &=
	\int_\R \chi_{\alpha>0}\, (1- e^{-\delta/\alpha})^2 e^{-2(t+\delta)/\alpha}(h, dE_\alpha h)\\
	& \leq \int_\R \chi_{\alpha>0}\, (1- e^{-\delta/\alpha})^2 (h, dE_\alpha h) \to 0
\end{aligned}
$$
as $\delta\to 0^+$, for each fixed $h$, by Lebesgue Dominated Convergence, and similarly
for $-t< \delta \to 0^-$. A symmetric argument yields the result for $T_u$.

These are exactly the ``bi-stable semigroups'' constructed by quite different, Fourier transform 
means in \cite{PZ1} (cf. \cite[\S 2]{PZ1}).
Note that in general, $T_s$ is not bounded in the backward time direction, nor $T_u$ in forward time direction,
as $|\alpha|$ may be arbitrarily small, yielding arbitrarily large exponential growth $e^{|t/\alpha|}$.
In particular, the Cauchy problem  
$ ( (d/dt)A +\Id)x=0 $
for $t\gtrless 0$, $x(0)=x^0\in \bH$
is ill-posed in both forward and backward time directions.
Note also that the only homogeneous solutions on center subspace 
$\Sigma_c$ are, by inspection, the trivial ones $x(t) \equiv 0$.

For real $r>0$, define the unbounded operator $|A|^{-r}$ 
as the inverse of $|A|^{r}$ from $\Range  \,|A|^{r}$ to $\Sigma_s\oplus \Sigma_u$, that is,
$|A|^{-r}x=\int_{\R\setminus \{0\}} |\alpha|^{-r}dE_\alpha x$ for $x\in \Range  \,|A|^{r}$.
The next result shows that $|A|^{-1/2}T_s$ and $|A|^{-1/2}T_u$, 
give solution operators for boundary data $|A|^{1/2}x(0)$ in $\Sigma_s$ and $\Sigma_u$, respectively.

\begin{lemma}\label{Tbds} For $t>0$ and $t<0$, respectively,
	$T_s(t)$ and $T_u(t)$ take $\bH$ to $\Range  \,|A|^r$ for any $r>0$, with sharp bounds
	\be\label{invbds}
	\hbox{\rm $||A|^{-r}T_s(t)h|\leq Ct^{-r}|h|$ for $t>0$ and
	$||A|^{-r}T_u(t)h|\leq Ct^{-r}|h|$ for $t<0$.}
	\ee
	In particular, 
	$x_s(t):= |A|^{-1/2}T_s(t) g_s$ and $x_u(t):= |A|^{-1/2}T_u(t) g_u$ are well-defined for  
	any $g_s\in \Sigma_s$ and $g_u\in \Sigma_u$.
	Moreover, 
	\be\label{TAbd}
	\hbox{\rm $x_s$, $(d/dt)(Ax_s)\in L^2(\R^+)$ and $x_u$, $(d/dt)(Ax_u)\in L^2(\R^-)$,}
	\ee
	so that $x_s$ and $x_u$ solve $(d/dt)(Ax)+x=0$ in forward and backward time, respectively,
	with boundary values
	\be\label{bvals}
	\hbox{\rm $|A|^{1/2}x_s(0)=g_s$ and $|A|^{1/2}x_u(0)=g_u$,}
	\ee
	and these solutions are unique in the class $x, (d/dt)(Ax)\in L^2(0, \tau)$,
	for all $\tau >0$ and $\tau<0$, respectively.
\end{lemma}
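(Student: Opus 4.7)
The overall plan is to exploit the spectral decomposition \eqref{sd} throughout, reducing all operator statements about $T_s$, $T_u$, and $|A|^{\pm r}$ to scalar integrals against the projection-valued measure $dE_\alpha$, where classical calculus and Fubini's theorem apply directly.

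For the smoothing estimate \eqref{invbds}, I would write
\[
|A|^{-r}T_s(t)h=\int_\R \chi_{\alpha>0}\,\alpha^{-r}e^{-t/\alpha}\,dE_\alpha h
\]
and observe that the scalar symbol $\alpha^{-r}e^{-t/\alpha}$ attains its pointwise maximum on $\alpha>0$ at $\alpha=t/r$, with value $r^re^{-r}t^{-r}$. Integrating the squared symbol against $(h,dE_\alpha h)$ gives $||A|^{-r}T_s(t)h|\leq r^re^{-r}t^{-r}|h|$. The bound for $T_u(t)$, $t<0$, follows by the substitution $\beta=-\alpha$, $s=-t$. Taking $r=1/2$ shows in particular that $x_s$ and $x_u$ are well-defined, and testing on approximations of eigenvectors as in Remark \ref{linrmk} shows sharpness as $t\to 0$.

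To verify \eqref{TAbd}, the differential equation, and the boundary values \eqref{bvals}, I would again compute directly on the spectral side. On $\Sigma_s$ one has $A=|A|$, so $Ax_s=|A|^{1/2}T_s(t)g_s=\int_0^\infty \alpha^{1/2}e^{-t/\alpha}dE_\alpha g_s$; Fubini against test functions $\phi\in C^\infty_c(\R^+)$ justifies termwise differentiation, giving the weak derivative $(d/dt)(Ax_s)=-x_s$ and hence $(d/dt)(Ax_s)+x_s=0$. The $L^2(\R^+)$ bound follows from the scalar identity $\int_0^{+\infty}\alpha^{-1}e^{-2t/\alpha}dt=\tfrac12$, yielding $\|x_s\|_{L^2(\R^+)}^2=\tfrac12|g_s|^2$ and, via the equation, the matching bound for $(d/dt)(Ax_s)$. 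For the trace, the identity $|A|^{1/2}x_s(t)=T_s(t)g_s$ on $\Sigma_s$ combined with strong continuity of $T_s$ at $t=0$ and $T_s(0)g_s=g_s$ gives $|A|^{1/2}x_s(0)=g_s$, in agreement with the continuous representative from Corollary \ref{soblem}. The case of $x_u$ is entirely symmetric.

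The main obstacle is uniqueness. By linearity, it suffices to show $w\equiv 0$ whenever $w,(d/dt)(Aw)\in L^2(0,\tau)$, $(d/dt)(Aw)+w=0$, and $|A|^{1/2}w(0)=0$. Decomposing $w=\Pi_s w+\Pi_c w+\Pi_u w$, the center component vanishes since $A\Pi_c=0$ forces $\Pi_c w=0$ directly from the equation. For the remaining pieces, apply the spectral cutoff $P_n:=\chi_{\{1/n\leq|\alpha|\leq n\}}(A)$, on whose range $A$ is bounded and boundedly invertible. Then $u_n:=P_n w$ lies in $H^1_{loc}(0,\tau)$ and satisfies the linear ODE $u_n'=-A^{-1}u_n$ with bounded coefficients. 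Because $P_n$ commutes with $|A|^{1/2}$, and the latter is boundedly invertible on $\Range\,P_n$, the condition $|A|^{1/2}w(0)=0$ descends to $u_n(0)=0$, and standard ODE uniqueness yields $u_n\equiv 0$. Letting $n\to\infty$ with $P_n\Pi_\pm\to\Pi_\pm$ strongly gives $\Pi_s w=\Pi_u w=0$, hence $w=0$. The delicate step is ensuring that continuity at $t=0$ and the boundary condition pass cleanly through the truncations, which is possible precisely because $|A|^{1/2}$, $P_n$, and the semigroups are all functions of the single operator $A$.
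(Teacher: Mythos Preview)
Your treatment of the smoothing bound \eqref{invbds}, the $L^2$ estimate \eqref{TAbd}, and the boundary trace \eqref{bvals} is essentially identical to the paper's, all being direct spectral computations; your explicit constant $r^r e^{-r}$ and the value $\tfrac12$ for the $L^2$ norm are correct refinements of what the paper records only as ``$\lesssim$'' and ``$\const$''.

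Where you genuinely diverge is in the uniqueness argument. The paper does not truncate spectrally; instead it reuses the quadratic-form machinery of Section~\ref{s:smooth}, observing that $\langle Aw,w\rangle$ is absolutely continuous with derivative $-2|w|^2$, so that on $\Sigma_s$ (where $A\geq 0$) the condition $\langle Aw,w\rangle(0)=0$ forces $\langle Aw,w\rangle\equiv 0$ and hence $\Pi_s w\equiv 0$ on $(0,\tau)$. The unstable component is then handled separately: either $|A|^{1/2}\Pi_u w(\tau)=0$ and backward uniqueness on $\Sigma_u$ applies, or $|A|^{1/2}\Pi_u w(\tau)=g\neq 0$ forces $\Pi_u w(t)=|A|^{-1/2}T_u(t-\tau)g$, which cannot satisfy the zero trace at $t=0$. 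Your route---truncate by $P_n=\chi_{\{1/n\leq|\alpha|\leq n\}}(A)$, reduce to a bounded-coefficient Banach-space ODE, apply Picard uniqueness, and pass to the limit---is correct and treats the stable and unstable parts symmetrically in one stroke. The trade-off is that the paper's energy argument yields the additional backward-uniqueness information recorded in Remark~\ref{backunique} as a byproduct, and ties the lemma thematically to the main estimates of Section~\ref{s:smooth}; your truncation argument is arguably cleaner for the bare uniqueness statement but does not by itself isolate forward versus backward behavior on the individual subspaces. The ``delicate step'' you flag---that $u_n(0)=0$ follows because $|A|^{-1/2}$ is bounded on $\Range P_n$ and $P_n|A|^{1/2}w(0)=0$---goes through exactly as you indicate.
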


\begin{proof}
	Noting that $\Range \, |A|^r$ consists of $x$ such that 
	$\int_{\R\setminus \{0\}} \langle x, |\alpha|^{-2r}dE_\alpha x\rangle<+\infty$,
	we obtain the first result from 
	$$
	\langle T_s(t)h, |\alpha|^{-2r}dE_\alpha T_s(t)h\rangle=
	\langle h, \chi_{\alpha>0}\, e^{-2t/\alpha}|\alpha|^{-2r}dE_\alpha h\rangle
	$$
	together with
	$
	|\alpha|^{-2r}|e^{-2t/\alpha} | \lesssim t^{-2r} 
	$
	for $t, \alpha>0$, as follows from $z^{2r}\lesssim e^{2z}$ for $z\in \R^+$.
	This gives at the same time \eqref{invbds}, which, by taking data $h$ with 
	measure $(h,dE_\alpha h)$ supported
	near $\alpha= t$, is easily be seen to be sharp.

	The second assertion, \eqref{TAbd}, follows similarly by the observation that 
	$$
	\int_0^{+\infty}(|\alpha|^{-1/2} e^{-t/\alpha})^2 dt
	=\int_0^{+\infty} |\alpha|^{-1}e^{-2t/\alpha}dt 
	= \int_0^{+\infty} e^{-2z}dz =\const,
	$$
	by substitution $z=t/\alpha$, $dz=\alpha^{-1}dt$.
	This gives by Fubini's Theorem 
	$$
	\begin{aligned}
		\| x_s \|_{L^2(\R^+)}^2&= 
		\int_{\R^+}\langle |A|^{-1/2}T_s g_0,|A|^{-1/2} T_s  g_0\rangle \\
		&=
		\int_{\R^+}  \langle g_0, |A|^{-1} T_s^2 g_0\rangle \\
		&=  \int_{\R^+} \int_{\R^+}  |\alpha|^{-1} e^{-2t/\alpha} \langle g_0, dE_\alpha g_0\rangle dt\\
		&= \int_{\R^+} \int_{\R^+} |\alpha|^{-1} e^{-2t/\alpha} dt  \langle g_0, dE_\alpha g_0\rangle \\
		&= C|g_0|^2,
	\end{aligned}
	$$
	hence $x_s\in L^2(\R^+)$ and, by $(d/dt)(Ax_s)=-x_s$, also $(d/dt)(Ax_s)\in L^2(\R^+)$.
	A similar computation gives \eqref{TAbd} for $T_u$.
	Meanwhile, \eqref{bvals} follows by $|A|^{1/2}x_s=T_s(t)g_s$,
	$|A|^{1/2}x_u=T_u(t)g_u$ and strong continuity of $T_s$, $T_u$ at $t=0^\pm$.

	Finally, uniqueness can be seen by an argument like that of Section \ref{s:smooth},
	after first projecting by  $\Pi_s$, $\Pi_u$, and $\Pi_c$ onto $\Sigma_s$, $\Sigma_u$, and $\Sigma_c$.
	For, $|A|^{1/2}x(0)=0$ implies 
	$$
	\langle x, Ax\rangle(0)=0.
	$$
	But, for solutions $x, (d/dt)(Ax)\in L^2$
	of homogeneous equation $(d/dt)(Ax)=-x$,
	the quadratic form $\langle x, Ax\rangle$ is absolutely continuous, with derivative $-\langle x,x\rangle\leq 0$.
	Restricted to $\Sigma_s$, where $A\geq 0$, we thus have by
	$$
	\langle x,Ax\rangle (t) = 
	\langle x,Ax\rangle (0) - \int_0^t |x|(s)^2 ds \leq 0
	$$
	that $\langle x,Ax\rangle (t) \equiv 0$ for $t \geq 0$. 
	This gives forward uniqueness, or uniqueness for $t\geq 0$, of the projection onto $\Sigma_s$.
	A similar argument yields backward uniqueness, or uniqueness for $t\leq 0 $, of the projection 
	onto $\Sigma_u$.
	Finally, on $\Sigma_c$, the homogeneous equation reduces to $x=0$, giving uniqueness for
	all $t$ of the projection onto $\Sigma_c$.

	Putting this information together, suppose we have a nontrivial solution $x\in (0,\tau)$ with
	$|A|^{1/2}x(0)=0$, $\tau>0$.  Then, the projection $x_s=\Pi_s x$ vanishes on $(0,\tau)$, as
	does the projection $x_c=\Pi_cx$. It remains to verify that $x_u=\Pi_u x$ vanishes on $(0,\tau)$.
	If $|A|^{1/2}x_u(\tau)=0$, then this follows by backward uniqueness on $\Sigma_u$. 
	If $|A|^{1/2}x_u(\tau)=g \neq 0 $, on the other hand, then 
	$x_u(t)\equiv |A|^{-1/2}T_u(t-\tau) g$, by backward uniqueness of functions valued
	in $\Sigma_u$.  In particular, we would have
	$$
	0=x_u(0)= |A|^{-1/2}T_u(-\tau) g
	= \int_\R \, \chi_{\alpha<0}\, |\alpha|^{-1/2} e^{\tau/\alpha} dE_\alpha g,
	$$
	which is evidently false unless $ dE_\alpha g\equiv 0$ and thus $g=0$.
	By contradiction, therefore, the result is proved.
\end{proof}

\br\label{backunique}
Note that we have obtained not only forward (backward) existence for Cauchy data in $\Sigma_s$ ($\Sigma_u$),
but uniqueness in both forward and backward directions.
For our purposes here, we only require forward (backward) uniqueness of solutions in $\Sigma_s$ ($\Sigma_u$);
however, the more general result seems interesting to note.
\er

We have also the following more familiar reinterpretation of the Fourier-transform solution \eqref{sol}
via variation of constants.
According to our earlier convention, define the unbounded operator $A^{-r}$ for integer $r>0$
as the inverse of $A^{r}$ from $\Range  \,A^{r}=\Range \, |A|^r$ to $\Sigma_s\oplus \Sigma_u$, that is,
$$
\hbox{\rm
$A^{-r}x=\int_{\R\setminus \{0\}} \alpha^{-r}dE_\alpha x$ for $x\in \Range  \,|A|^{r}$.}
$$
By Lemma \ref{Tbds}, $A^{-r}$ is well-defined on $T_s(t)x$ for $t>0$ and $T_u(t)x$ for $t<0$.

To state things most simply, define the spectral cutoffs 
$g^a(t):=\int_{|\alpha|\geq a} dE_\alpha g(t)$ for $a>0$ of a function $g\in L^2(\R)$.
Evidently, $g^a\to g_s+ g_u:=\Pi_s g + \Pi_u g$ both pointwise and in $L^2(\R)$ as $a\to 0^+$.
Then, we have the following variation of constants type formula, expressed in terms of improper integrals
with respect to the spectral parameter $\alpha$.

\bl\label{ylem}
The unique solution $y, (d/dt)(Ay)\in L^2(\R)$ of $(d/dt)(Ay)+y=f$ defined by \eqref{sol}
may be expressed alternatively as 
\ba\label{greens}
y(t)&=\lim_{a\to 0^+}\Big(\int_{-\infty}^t A^{-1}T_s(t-\tau)\Pi_s f^a(\tau)d\tau \\
	&\quad - 
	\int_t^{+\infty} A^{-1} T_u(t-\tau)\Pi_u f^a(\tau)d\tau \Big)  + \Pi_c f(t),
\ea
where the limit is taken in $L^2(\R)$.
In particular, for $f$ supported on $(\tau_0, \tau_1)$,
\be\label{cause}
\hbox{\rm 
$y_s(t)\equiv 0$ for $t\leq \tau_0$ and $y_u(t)\equiv 0$ for $t\geq \tau_1$.}
\ee
\el

\begin{proof}
Since $\Pi_s$, $\Pi_c$, and $\Pi_u$ commute with $A$, it is equivalent to show that 
$y_s:=\Pi_s y$, $y_u:=\Pi_u y$, and $y_c:=\Pi_c y$ are given by
$$
y_s(t)=\lim_{a\to 0^+}\int_{-\infty}^t A^{-1}T_s(t-\tau)\Pi_s f^a(\tau)d\tau, 
$$
$$
y_u(t)=- \lim_{a\to 0^+}\int_{-\infty}^t A^{-1}T_u(t-\tau)\Pi_u f^a(\tau)d\tau ,
$$
and $y_c=\Pi_c f$.
	
The third relation is nothing other than the projection of the evolution equation onto $\Sigma_c$, since
$\Pi_cA=0$.
	For operators $A$ with $|A|\geq a \Id>0$ in the sense of quadratic forms, 
	the first two follow from the standard variation
of constants formula for the solution of $(d/dt)x=-A^{-1}x$.
For, example, restricting for definiteness to the stable subspace $\Sigma_s$, we may first project
	the equation by $\Pi_s$ onto $\Sigma_s$, then, noting that $A$ in this case has bounded inverse,
	apply $A^{-1}$ to obtain the bounded-coefficient ODE
	\be\label{seq} (d/dt)y_s + A^{-1}y_s= (A^+)^{-1}\Pi_s f \ee
	Observing that $\|e^{-A^{-1}t}\Pi_s \|\leq Ce^{-t/\|A\|}$, we obtain by variation of constants
	that $y_s(t)=\int_{-\infty}^t A^{-1}T_s(t-\tau)\Pi_s f(\tau)d\tau $ 
	is the unique solution of \eqref{seq} in $L^2$.
	A symmetric argument yields the result for $y_u(t)$.

	Now, introduce the spectral cutoffs $f^a:=\int_{|\alpha|\geq a} dE_\alpha f$ of $f$ defined above, for $a>0$,
	and denote the corresponding solutions $y$ as $y^a$.
	By $f^a\to \Pi_s f+ \Pi_u f$ in $L^2(\R)$,
	together with the previously-shown boundedness of the solution operator, we have
	$y^a\to y_s+y_u$ in $L^2$.
	But, also, $|A|\geq a\Id>0$ with respect to functions supported on spectra $|\alpha|\geq a$,
	hence, by the discussion of the previous paragraph,
$$
	y^a(t)=\int_{-\infty}^t A^{-1}T_s(t-\tau)\Pi_s f^a(\tau)d\tau - 
\int_t^{+\infty} A^{-1} T_u(t-\tau)\Pi_u f^a(\tau)d\tau  
$$
	yielding \eqref{greens}. Property \eqref{cause} is an immediate consequence.
\end{proof}

\br\label{bochner_rmk}
The expression of \eqref{greens} in terms of improper integrals as $|\alpha|\to 0^+$ highlights again
the difference from the usual, nondegenerate case \cite{LP2}, for which the right-hand side of
\eqref{greens} may be expressed in terms of a standard Bochner integral on Hilbert space-valued functions \cite{DU}.
We show by explicit counterexample in Appendix \ref{s:bochner} that this is not necessarily the case
in the present, degenerate context.
%
\er

\br\label{varcrmk}
Equation \eqref{greens} differs from the standard variation of constants formula
in the final term $\Pi_c f(t)$, which appears to be of different form.
But, note that kernels $\alpha^{-1}e^{-t/\alpha}$ for the first two terms converge as $\alpha\to 0$
to a $\delta$-function, formally yielding the third upon convolution with $f$.
\er

Combining the above facts, we obtain the following solution formula for the {\it boundary-value problem on an interval},
that is, for solutions $x(t)$ in our $L^2_{loc}$ sense of
the linear problem \eqref{lin} on $(t_0,t_1)$
with boundary conditions imposed on the continuous image $|A|^{1/2}x(t)$ at $t=t_0, t_1$.

\begin{proposition}\label{excor} For each $f\in L^2(t_0,t_1)$, $g_0 \in \Sigma_s$, and $g_1 \in \Sigma_u$, 
	there is a unique weak $L^2_{loc}$ solution $x$, $(d/dt)(Ax) \in L^2(t_0,t_1)$,  of \eqref{lin}
	satisfying boundary conditions 
	\be\label{gBC}
(|A|^{1/2}\Pi_s x)(t_0)=g_0,\quad  (|A|^{1/2}\Pi_u x)(t_1)=g_1
\ee
on $|A|^{1/2}x$, given by
\ba\label{alt}
	x(t)&= y(t)+ |A|^{-1/2}T_s(t-t_0)g_0 + |A|^{-1/2}T_u(t-t_1)g_1,\\
	\widehat y(\omega)&= (i\omega A +\Id)^{-1}\widehat{f_{|[t_0,t_1]}}(\omega),  
	\ea
	or alternatively, with $y$ defined by \eqref{greens}, with $f$ extended as $0$ outside $(\tau_0,\tau_1)$.
\end{proposition}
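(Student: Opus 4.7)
The plan is to verify existence by direct substitution of the proposed formula and to establish uniqueness by reducing to the homogeneous problem with vanishing boundary data, at which point the argument used in Lemma \ref{Tbds} applies.

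For existence, write $x = y + x_s + x_u$ with $x_s(t) := |A|^{-1/2}T_s(t-t_0)g_0$ and $x_u(t) := |A|^{-1/2}T_u(t-t_1)g_1$. By Lemma \ref{estlem} together with Parseval's identity, $y, (d/dt)(Ay) \in L^2(\R)$, and $y$ solves $(d/dt)(Ay) + y = f$ on $\R$ (with $f$ extended by zero off $(t_0,t_1)$). By Lemma \ref{Tbds}, $x_s, (d/dt)(Ax_s) \in L^2(t_0, +\infty)$ and $x_u, (d/dt)(Ax_u) \in L^2(-\infty, t_1)$, each solving the homogeneous equation on the appropriate half-line. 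Restricting to $(t_0, t_1)$ and summing, linearity gives $(d/dt)(Ax) + x = f$ with $x, (d/dt)(Ax) \in L^2(t_0, t_1)$, so $x$ is a weak $L^2_{loc}$ solution.

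For the boundary conditions, Corollary \ref{soblem} lets us take $|A|^{1/2}$ times each of $y$, $x_s$, $x_u$ to be continuous on $[t_0, t_1]$. The causality property \eqref{cause} of Lemma \ref{ylem} gives $\Pi_s y \equiv 0$ on $(-\infty, t_0]$ and $\Pi_u y \equiv 0$ on $[t_1, +\infty)$, so by continuity $(|A|^{1/2}\Pi_s y)(t_0) = 0$ and $(|A|^{1/2}\Pi_u y)(t_1) = 0$. Since $x_s$ takes values in $\Sigma_s$ and $x_u$ in $\Sigma_u$, they contribute nothing to the $\Pi_u$ trace at $t_0$ and the $\Pi_s$ trace at $t_1$, respectively, while \eqref{bvals} yields exactly $g_0$ and $g_1$ in the remaining components, establishing \eqref{gBC}.

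For uniqueness, let $w$ be the difference of two candidate solutions. Then $w$ is a weak $L^2_{loc}$ solution of $(d/dt)(Aw) + w = 0$ on $(t_0, t_1)$ with $(|A|^{1/2}\Pi_s w)(t_0) = 0$ and $(|A|^{1/2}\Pi_u w)(t_1) = 0$. Projecting by $\Pi_c$ and using $\Pi_c A = 0$ gives $\Pi_c w \equiv 0$. On $\Sigma_s$, where $A \geq 0$, Lemma \ref{2.1lem} yields that $\langle \Pi_s w, A \Pi_s w \rangle$ is absolutely continuous with derivative $-|\Pi_s w|^2 \leq 0$; as it vanishes at $t = t_0$ and is nonnegative on $\Sigma_s$, it is identically zero on $[t_0, t_1]$, forcing $\Pi_s w \equiv 0$. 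A symmetric backward-time argument on $\Sigma_u$ (where $-A \geq 0$, using that $\langle \Pi_u w, A \Pi_u w \rangle \leq 0$ is non-increasing in $t$ and vanishes at $t_1$) yields $\Pi_u w \equiv 0$, so $w \equiv 0$. The principal technical point is the rigorous handling of boundary traces for merely $L^2_{loc}$ data, which is resolved by Corollary \ref{soblem} together with the causality statement \eqref{cause}.
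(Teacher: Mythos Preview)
Your argument is correct and follows essentially the same route as the paper: decompose $x=y+x_s+x_u$, verify that $y$ carries the inhomogeneity with vanishing stable/unstable boundary traces via \eqref{cause}, that $x_s,x_u$ carry the boundary data via \eqref{bvals}, and deduce uniqueness from the homogeneous problem. The only differences are cosmetic: the paper simply cites Lemma~\ref{Tbds} for uniqueness, whereas you reproduce the quadratic-form argument in place; and your sentence about $x_s,x_u$ ``contributing nothing to the $\Pi_u$ trace at $t_0$ and the $\Pi_s$ trace at $t_1$'' has the indices transposed relative to what is actually needed (you need $\Pi_s x_u(t_0)=0$ and $\Pi_u x_s(t_1)=0$), though the conclusion is unaffected.
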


\begin{proof}
	Evidently, $ z=x-y $
	is a solution of the homogeneous equation 
	$$
	(d/dt)(Az)+z=0
	$$
	satisfying 
	the prescribed boundary conditions, while $y$ is a solution of 
	$$
	(d/dt)(Ay)+y=f_{[t_0,t_1]},
	$$
	satisfying by \eqref{cause} homogeneous boundary conditions $|A|^{1/2}\Pi_s y(t_0)=0$ and 
	$$
	|A|^{1/2}\Pi_u y(t_1)=0.
	$$
	Thus, by linear superposition, $x=y+z$ satisfies \eqref{lin} with boundary conditions \eqref{gBC}.
	Uniqueness follows from uniqueness of solution $z_s$, $z_u$, and $z_c\equiv 0$
	of the homogeneous equation for $z$ on $[t_0,t_1]$ under the boundary conditions at $t_0$, $t_1$,
	a consequence of Lemma \ref{Tbds}.
\end{proof}

\subsection{ Relation to mild solutions}\label{s:relation} 
With these preparations, we are now ready to compare our notion of weak $L^2_{loc}$ solution
with that of ``mild'' $L^2_{loc}$ solution defined as follows in \cite{PZ1} (cf. \cite{LP2}).

\begin{definition}[\cite{PZ1,LP2}]\label{d:mild}
	For $f\in L^2_{loc}$ on a given domain $D$, a \emph{mild $L^2_{loc}$ solution} of \eqref{lin}
	is a function $x\in L^2_{loc}(D)$ with Fourier transform satisfying
	\ba\label{mild}
	\widehat{x}(\omega)&=
	 A(i\omega A+\Id)^{-1} \big[e^{-2\pi\rmi\omega t_0}h_0-e^{-2\pi\rmi\omega t_1}h_1\big]+
	 (i\omega A +\Id)^{-1}\widehat{f_{|[t_0,t_1]}}(\omega)
	\ea
with $h_0\in \Sigma_s$ and $h_1\in \Sigma_u$,\footnote{
	In \cite{PZ1,LP2}, for which there was assumed no center subspace, this definition was given
	for $h_j\in \bH$; however, without loss of generality it may be stated as above, thus allowing also the
	case of a nontrivial center subspace.
	}
or, equivalently  (see \cite[Remark 3.2]{PZ1}), the variation of constants type formula
\be\label{alt2}
x(t)= y(t)+ T_s(t-t_0)h_0 + T_u(t-t_1) h_1,
\quad \widehat y(\omega)= (i\omega A +\Id)^{-1}\widehat{f_{|[t_0,t_1]}}(\omega).  \ee
\end{definition}

Definition \ref{d:mild} gives a notion of a mild $L^2_{loc}$ solution of \eqref{lin}; 
a mild $L^2(\R^+)$ solution of \eqref{maineq} may then be defined as in \cite{PZ1,PZ2}
as a mild $L^2_{loc}$ solution $x\in L^2(\R^+)$ of \eqref{lin} with $f= \cG(x)$
(cf. \cite[Def. 3.1(ii)]{PZ1} and \cite[Lemma 3.3]{PZ1}). 

In \eqref{mild}--\eqref{alt2}, 
$h_0$ and $h_1$ are in general not connected with boundary values of $x$ at $t_0$ and $t_1$,
which may not even be defined. However, 
$$
T_s(t-t_0)h_0 + T_u(t-t_1) h_1
$$
is continuous, while $y(t)$ (since decaying at $\pm \infty$)
vanishes on $(-\infty, t_0)$ in stable modes and on $(t_1, \infty)$ in unstable modes.
Thus, for $H^1$ solutions, admitting continuous representatives (the class ultimately considered in 
the invariant manifold constructions of \cite{PZ1,PZ2,Z1}),
we have $\Pi_s x(t_0)=h_0$ and $\Pi_u x(t_1)=h_1$; in particular, for $H^1(\R^+)$ solutions of \eqref{maineq},
there is a well-defined boundary value $\Pi_s x(0)$ at $t=0$.

Comparing \eqref{alt} and \eqref{alt2}, and noting that 
$$
|A|^{-1/2}T_s |A|^{1/2}=T_s, \qquad |A|^{-1/2}T_u |A|^{1/2}=T_u, 
$$
we see that mild solutions are $L^2_{loc}$ solutions as defined here, but not conversely. 
Indeed, they are precisely the subclass of $L^2_{loc}$ solutions for which the boundary values $g_0$ and $g_1$
of $|A|^{1/2}x(t)$ at $t_0$ and $t_1$ are given by $|A|^{1/2}\Pi_s h_0$ and $|A|^{1/2}\Pi_u h_1$ 
with $h_j\in \bH$, i.e., for which $g_0$ and $g_1$ lie in $\Range \, |A|^{1/2}$.

\br\label{1-1rmk}
The representations \eqref{mild}--\eqref{alt2} were derived in \cite{PZ1} under the
assumption that $A$ be one-to-one; however, this assumption is not necessary, as shown by our analysis above. 
Indeed, one may check that the entire $H^1$ stable manifold construction of \cite{PZ1} goes through for general $A$.
We note that the key relation
$A(i\omega A+\Id)^{-1} e^{-2\pi\rmi\omega t_0}x(t_0) = \widehat{(T_s(t-t_0)x(t_0))}$
linking \eqref{mild} and \eqref{alt2} (see \cite[Remark 3.2(ii)]{PZ1}) 
follows in the general case from the result in the invertible case, by the observation that 
$$
A(i\omega A+\Id)^{-1} e^{-2\pi\rmi\omega t_0} =\Pi_{su} A(i\omega A+\Id)^{-1} e^{-2\pi\rmi\omega t_0},
$$
where $\Pi_{su}=\Pi_s+\Pi_u$ denotes projection onto $\Sigma_s\oplus \Sigma_u$.
Likewise, \eqref{alt} has the equivalent frequency-domain formulation
$$
\widehat{x}(\omega)=
A(i\omega A+\Id)^{-1}|A|^{-1/2} \big[e^{-2\pi\rmi\omega t_0}g_0-e^{-2\pi\rmi\omega t_1}g_1 \big]+
(i\omega A +\Id)^{-1}\widehat{f_{|[t_0,t_1]}}(\omega).
$$
\er

\section{Applications to Boltzmann's equation}\label{s:applications}
The main example considered in \cite{PZ1} was the {\it steady Boltzmann equation} (plus cousins and discrete approximations), 
\be\label{Boltz}
\xi_1 f_z=Q(f,f),\quad z\in \R^1, \, \xi\in \R^3,
\ee
with hard-sphere collision operator $Q$,
$f=f(z,\xi)$ denoting density at spatial point $z$ of particles with velocity $\xi$,
which, after the coordinate change
$f\to \langle \xi \rangle^{1/2}f$, $Q \to \langle \xi \rangle^{1/2}Q$, $\langle \xi\rangle:=\sqrt{1+|\xi|^2},$ 
can be put in form 
 $Aw_z = Q(w,w),$ 
with $\bH$ the standard square-root Maxwellian-weighted $L^2$ space in variable $\xi$,
$A=\xi_1/\langle \xi\rangle$ a multiplication operator, and $Q$ a bounded bilinear map \cite{MZ1}.
Note that $A$ has no kernel on $L^2$.
However, $0$ is in the essential range of the 
function $\xi_1/\langle\xi\rangle$, hence in the essential spectrum of the operator of
multiplication by $\xi_1/\langle\xi\rangle$.
That is, $A$ is ``essentially singular''.

Introducing the perturbation variable $x:=w-w_0$, where $w_0$ is an equilibrium, $Q(w_0,w_0)=0$,
and performing some straightforward further reductions \cite{PZ1,Z1} converts the equations 
to form \eqref{maineq}, with $t=z$ and $\cG(x)=B(x,x)$, $B$ a bounded bilinear map.
In this context, the problem considered here, of decay and smoothness of small solutions $x(t)$, amounts to the study of 
convergence and smoothness of solutions toward an equilibrium $w_0$. 
The particular motivation described in \cite{PZ1} was the desire to study existence and temporal stability
of large-amplitude heteroclinic connections, or planar {\it Boltzmann shock or boundary layers}, for which
the study of stable manifolds and decay to equilibria is an important first step.
The main result of \cite{PZ1} was construction of an $H^1$ stable manifold at $w_0$ containing all orbits 
sufficiently close to $w_0$ in $H^1(\R^+)$, exhibiting uniform exponential decay.

The global problem of existence and structure of large-amplitude Boltzmann shocks, 
as discussed by Truesdell, Ruggeri, Boillat, and others \cite{BR}, 
is one of the fundamental open problems in Boltzmann theory.
For this larger problem, it is important to know that the $H^1$ stable manifolds of \cite{PZ1} in fact
contain all candidates for heteroclinic connections, i.e., that the $H^1(0,+\infty)$ regularity imposed on solutions
in \cite{PZ1} is not too strong, eliminating potential connections.
Thus, the questions of regularity considered in the present paper are not just technical,
but central to the physical discussion.

In particular, we have answered here in the affirmative the two main open questions posed in \cite{PZ1}: 

1. Do asymptotically decaying (or just sufficiently small) $L^2_{loc}(\R^+)$ solutions of \eqref{maineq} decay exponentially in $t$?

2. Are small (in sup norm) $L^2_{loc}(\R^+)$ solutions necessarily in $H^1$ or higher Sobolev spaces?

\noindent
These results, together with those of \cite{PZ1}, imply that {\it the tail of any} (small- or large-amplitude)
{\it Boltzmann shock or boundary layer is $C^\infty$ and lies in the $H^1$ stable manifold constructed in \cite{PZ1}}.

\section{Discussion and open problems}\label{s:discussion}
The results of instantaneous smoothing obtained here are somewhat
analogous to interior regularity results for more standard boundary-value problems, e.g., elliptic and other boundary-value ODE.  
However, here, it should be noted, due to allowed degeneracy of $A$, there is in general no gain in regularity
in solutions of the linear inhomogeneous problem $(d/dt)Ax+ x=f$ of Proposition \ref{excor}, with $L^2$ forcing $f$ leading to
$L^2$ regularity and not higher of solutions $x$.
More, as discussed in \cite{PZ1,Z1}, the Fourier multiplier $(i\omega A+\Id)^{-1}$ is bounded on $L^p$ for $1<p<\infty$, but not $p=1$ or $\infty$;
thus, the solution operator is not associated with an integrable kernel as in more standard cases.

An interesting remaining open problem is to construct $L^\infty$ decaying solutions of \eqref{maineq} that are not small in $H^1$, i.e., backward extensions of the manifold of $H^1$ solutions constructed in \cite{PZ1}.
(Here, we showed that solutions that are eventually small in $L^\infty$ in fact decay to this manifold, 
but did not produce any such.)
A related very interesting open problem, moving toward construction of full heteroclinic orbits, is the question of backward uniqueness of solutions of \eqref{maineq}, 
i.e., whether $L^\infty(0,+\infty)$ solutions agreeing on $t\geq t'>0$ must agree on $t>0$.
Existence of large Boltzmann shocks- the ``structure problem of Ruggeri et al \cite{BR}-
is a major open problem, involving in addition separate, and presumably more problem-dependent, issues of global analysis.

We note that forward uniqueness of small $L^\infty(0,+\infty)$ solutions holds for $\mathcal{G}$ Lipschitz 
with small Lipschitz norm, by essentially the same argument as in step 1 of the proof of Theorem \ref{mainthm}
applied to the error equation governing $e:=x_1-x_2$, where $x_1$ and $x_2$ are two solutions with $x_1(0)=x_2(0)$.
This extends the result of forward uniqueness of
$H^1(0,+\infty)$ solutions following from the $H^1$ stable manifold results of \cite{PZ1}.
Thus, an interesting preliminary question is whether backward uniqueness of small $L^\infty(0,+\infty)$ solutions 
can fail for the same class of Lipschitz $\mathcal{G}$ with small Lipschitz norm.

\appendix
\section{Sobolev estimates for difference operators}\label{s:sob}

\bl\label{ldiff}
(i) For $f:(0,+\infty)\to \bH$ in $L^2(0,+\infty)$, there holds
$$
\sqrt{\int_0^\infty |f'|^2}
\lesssim \liminf_{\tau \to 0^+} \tau^{-1} \|f(\cdot +\tau)-f(\cdot)\|_{L^2(\R^+,\bH)}.  
$$
(ii) For $f:(0,+\infty)\to \bH$ in $H^1_{loc}(0,+\infty)$, there holds $\tau^{-1} (f(\cdot +\tau)-f(\cdot))\to f'$
in $L^2_{loc}$ as $\tau\to 0^+$.
\el

\begin{proof}
(i). By Fatou's Lemma,
	$$
	\begin{aligned}
		\|(d/dt) g\|_{L^2(\R)}&= \int_\R \omega^{2} |\widehat g|^2 d\omega\\
		&=\int_\R \lim_{\tau\to 0^+} |\tau^{-1} (e^{-i\tau \omega}-1) \widehat g(\omega)|^2 d\omega\\
	&\leq \liminf_{\tau\to 0^+}\int_\R |\tau^{-1} (e^{-i\tau \omega}-1) \widehat g(\omega)|^2 d\omega\\
		& = \liminf_{\tau\to 0^+}\Big\|\tau^{-1}(g(\cdot +\tau)- g(\cdot))\Big\|_{L^2(\R)}
		\end{aligned}
		$$
	for functions $g\in L^2(\R)$.
	Setting $g=\chi^{\eps} f$ with $\chi^\eps(z):=\chi(z/\eps)$ a smooth cutoff function,
$\chi(z)$ equal to zero for $z=0$ and $1$ for $z\geq 1$, 
and observing that $\sup |(\chi^\eps)'| \lesssim \eps^{-1}$,
we thus have
$$
\tau^{-1}\|\chi^{\eps} f(\cdot+\tau)- \chi^{\eps} f(\cdot)\|_{L^2(\R)}\lesssim
\tau^{-1} \|f(\cdot +\tau)-f(\cdot)\|_{L^2(0,+\infty)} +     
\eps^{-1} \|f \|_{L^2(0,+\infty)},
$$ 
whence $f\in H^1(\eps,+\infty)$, and $f\in C^0[\eps,+\infty)$, for each $\eps>0$
(albeit with bound $\sim \eps^{-1}$ blowing up as $\eps\to 0^+$).

In particular, $f$ has a well-defined vaue $f(\eps)$ at $x=\eps$.
Extending $f|_{[\eps,+\infty)}$ by 
$$
\tilde f(t):=\begin{cases}
f(t)& t\geq\eps,\\
f(\eps)& t\leq \eps,
\end{cases}
$$
and defining a different smooth cutoff function $\tilde \chi^L(z):=\tilde \chi(z /L)$,
	with $\tilde \chi(z)$ equal to zero for $z\leq -1$ and $1$ for $z\geq 0$, set 
	$\tilde g=\tilde \chi^L \tilde f$.
Using $\tilde g'= (\tilde \chi^L)'f(\eps) + \tilde \chi^L f'|_{(\eps, +\infty)}$, we have
	\be\label{onee} 
	\|\tilde f'\|_{L^2(\eps,+\infty)}
	\leq
	\|\tilde g'\|_{L^2(\R)}\leq \liminf_{\tau\to 0^+} \tau^{-1}\|\tilde g(\cdot+\tau)-\tilde g\|_{L^2(\R)}.
	\ee

	Computing, by Jensen's inequality that 
	$$
	\begin{aligned}
		\tau^{-2}\int_0^\tau |f(\eps+t)-f(\eps)|^2 dt &= \int_0^\tau \Big|\int_0^t  f'(\eps+s) ds\Big|^2 dt\\
		&\leq
		\tau^{-2} \int_0^\tau t \, dt  \|f'\|_{L^2(\eps, \eps+\tau)}^2
		\leq   \|f'\|_{L^2(\eps, \eps+\tau)}^2\to 0
	\end{aligned}
	$$
	 as $\tau\to 0^+$, 
	and using $\|(\tilde \chi^L)'\|_{L^2(\R)}=L^{-1/2} \|\tilde \chi'\|_{L^2(\R)}$ together
	with the reverse inequality
	$
	\tau^{-1}|\|\tilde \chi^L(\cdot+\tau)-\tilde \chi^L\|_{L^2(-\infty,\eps-\tau)}
	\leq \| (\tilde \chi^L)' \|_{L^2(-\infty,\eps)}$,
	 we have also, for $\tau<\eps$,
\ba\label{twoe}
\tau^{-1}\|\tilde g(\cdot+\tau)-\tilde g\|_{L^2(\R)} \, -  \, 
&\tau^{-1}\|f(\cdot+\tau)-f\|_{L^2(\eps, \infty)} \leq \\
	&
	\tau^{-1}|f(\eps)| \, \|\tilde \chi^L(\cdot+\tau)-\tilde \chi^L\|_{L^2(-\infty,\eps-\tau)} \\
	&\quad +
	\tau^{-1} \Big(\int_0^\tau |f(\eps+t)-f(\eps)|^2 dt\Big)^{1/2} 
	\to 0
	\ea
as $L\to \infty$ and $\tau\to 0^+$.

Taking $\liminf_{\tau\to 0^+}\lim_{L\to \infty}$ of \eqref{twoe}, and combining with \eqref{onee},
we thus obtain
$$
\|f'\|_{L^2(\eps,+\infty)}\leq 
	\liminf_{\tau \to 0^+} \tau^{-1} \|f(\cdot +\tau)-f(\cdot)\|_{L^2(\eps,+\infty)}
$$
for each $\eps>0$, whence the first assertion follows in the limit as $\eps\to 0^+$.

(ii) Similarly, for functions $g\in H^1(\R)$, we have
$\tau^{-1} (g(\cdot +\tau)-g(\cdot))\to g'$ in $L^2(\R)$, by
$\lim_{\tau\to 0^+} 
\tau^{-1} (e^{-i\tau \omega}-1)\widehat g = i\omega \widehat g(\omega)$ a.e.
and the Lebesgue dominated convergence theorem, noting that $|\tau^{-1} (e^{-i\tau \omega}-1)\widehat g |\lesssim
|\omega \widehat g| \in L^2(\R)$ by $\sup_{z\in \R } |z^{-1}(e^{iz}-1)|\lesssim 1$.
Thus, for $f\in H^1(t_0-\delta, t_0+\delta)$, $\delta>0$, we have, defining $g:=\chi (t)f(t)$,
where $\chi$ is a smooth cutoff equal to $1$ for $t\in (t_0-\delta/2, t_0+\delta/2)$ and zero outside
$(t_0-\delta,t_0+\delta)$, that $\tau^{-1} (g(\cdot +\tau)-g(\cdot))\to g'$ in $L^2(\R)$, and therefore
$\tau^{-1} (f(\cdot +\tau)-f(\cdot))\to f'$ in $L^2( t_0-\delta/4, t_0+\delta/4)$.
As $t_0$ and $\delta>0$ were arbitrary, the result follows.
\end{proof}

\bl\label{ldiff2}
For any $t\in \R$, there holds
$$
\int_t^{+\infty}|x|^4 \leq 
\Big( \frac{2}{2^{1/4}-1}\Big)^2
\Big(\int_t^{+\infty}|x|^2\Big)
\Big( \sup_{\tau>0} \int_t^{+\infty} \tau^{-1} |x(s+\tau)-x(s)|^2 ds \Big).
$$
\el

\begin{proof}
Denoting $C_1=\int_t^{+\infty}|x|^2$ and $C_2= \sup_{\tau>0} \int_t^{+\infty}\tau^{-1}|x(s+\tau)-x(s)|^2 ds$,
we have 
$$
\int_t^{+\infty}|x|^2\leq C_1
$$
and $\int_t^{+\infty}|x(s+\tau)-x(s)|^2 ds\leq C_2 \tau$ for all $\tau>0$.

Let 
$$
\phi_h(t):=
	\begin{cases}
		1/h & \hbox{\rm for $0\leq t\leq h$},\\
		0 & \hbox{\rm otherwise.}
	\end{cases}
$$
Then, on $(t,+\infty)$, we have for any choice of scale $\sigma>0$ the Haar decomposition
$$
\begin{aligned}
	x(s)&=
	\lim_{k\to \infty} \int_0^{+\infty}x(s+u)\phi_{2^{-k}\sigma}(u) du \\
	&= \int_0^{+\infty}x(s+u)\phi_\sigma (u) du +
\sum_{k\geq 1,\,  h=2^{-k}\sigma} \int_0^{+\infty}x(s+u)[\phi_h(u)-\phi_{2h}(u)] du \\
&=:
	x_\sigma +\sum_{k\geq 1,\,  h=2^{-k}\sigma} \tilde x_{h} .
\end{aligned}
$$

Note by Jensen's inequality that 
	$$
\begin{aligned}
	\int_t^{+\infty}|x_\sigma|^2 & \leq 
	\int_t^{+\infty} \int_0^{+\infty} \phi_\sigma(u) |x(s+u)|^2 du\, ds\\
	&= \int_0^{+\infty}\phi_\sigma(u) \int_t^{+\infty}  |x(s+u)|^2 ds\, du\\
	&\leq \Big(\int_0^{+\infty} \phi_\sigma\Big) \Big( \int_t^{+\infty}|x|^2\Big)\leq C_1,
\end{aligned}
	$$
	and by Cauchy-Schwarz' inequality that
$$
\|x_\sigma\|_{L^\infty(t,+\infty)}\leq
\Big(\int_t^{+\infty}|x|^2\Big)^{1/2}
	\Big(\int_0^{+\infty}|\phi_\sigma|^2\Big)^{1/2}\leq \sqrt {C_1} (1/\sqrt \sigma)=\sqrt{C_1/\sigma},
$$
whence
$\int_t^{+\infty}|x_\sigma|^4\leq 
\|x_\sigma\|_{L^\infty(t,+\infty)}^2 \int_t^{+\infty}|x_\sigma|^2\leq C_1^2/\sigma.$

Note also that $\phi_h(u)-\phi_{2h}(u)= (1/2)[\phi_h(u)-\phi_h(u-h)]$, so that
$$
\begin{aligned}
	\tilde x_h(s)&= (1/2)\int_0^{+\infty} x(s+u)[\phi_h(u)-\phi_h(u-h)]du\\
	& = (1/2)\int_0^{+\infty} [x(s+u)-x(s+u+h)]\phi_h(u) du,
\end{aligned}
$$
and therefore, similarly to the previous computation,
$$
	\int_t^{+\infty}|\tilde x_h|^2\leq (1/4)\int_t^{+\infty} |x(s+h)-x(s)|^2 ds\leq (1/4)C_2 h,
$$
while
$$
\begin{aligned}
	\|\tilde x_h\|_{L^\infty(t,+\infty)}&\leq
(1/2)\sqrt{ \int_t^{+\infty} |x(s+h)-x(s)|^2 ds}\sqrt{\int_0^{+\infty} \phi_h^2}\\
	& \leq (1/2)\sqrt{C_2h} (1/\sqrt{h})=(1/2)\sqrt{C_2}.
\end{aligned}
$$

Thus, 
$\int_t^{+\infty} |\tilde x_h|^4 \leq \|\tilde x_h\|_{L^\infty(t,+\infty)}^2 \int_t^{+\infty}|\tilde x_h|^2\leq (1/16)C_2^2 h\leq (1/16)C_2^2 \sigma 2^{-k}$ 
for $h=\sigma2^{-k}$, whence
$$
\begin{aligned}
\|x\|_{L^4(t,+\infty)} & \leq
\|x_\sigma \|_{L^4(t,+\infty)} +\sum_{k\geq 1,\,  h=2^{-k}\sigma} \|\tilde x_{h} \|_{L^4(t,+\infty)} \\
&\leq (C_1^2/\sigma)^{1/4} + (C_2^2\sigma/16)^{1/4} \sum_{k\geq 1} 2^{-k/4} \\
	&= C_1^{1/2}/\sigma^{1/4} + C_2^{1/2} \sigma^{1/4}/(2(2^{1/4}-1)). 
\end{aligned}
$$
Taking $\sigma^{1/4} =(C_1/C_2)^{1/4}\sqrt{2(2^{1/4}-1)}$ to minimize the right-hand side
gives 
$$
\|x\|_{L^4(t,+\infty)} \leq \Big((2/(2^{1/4}-1))^2 C_1C_2\Big)^{1/4} $$
as required.
\end{proof}

\section{Integrals and counterexample}\label{s:bochner}
We conclude with an explicit counterexample showing that our expression \eqref{greens} of the 
variation of constants formula in terms of improper integrals as $a\to 0^+$ cannot in general 
be reformulated in terms of standard Bochner integrals for Hilbert space-valued functions \cite{DU}.

Let $\bH= L^2(0,1)$ and $A$ the multiplication operator
$Ah(\alpha)=\alpha h(\alpha)$. Then, $T_s(t)$ is also a multiplication operator, with
$T_s(t)h(\alpha)= e^{-t/\alpha} h(\alpha)$ and 
$A^{-1}T_s(t)h(\alpha)= \alpha^{-1}e^{-t/\alpha} h(\alpha)$.
Meanwhile $\Pi_s=\Id$, $\Pi_u$, $T_u$, and $\Pi_c$
are identically zero, hence \eqref{greens} reduces to 
\be\label{boch}
	y(t)=y_s(t)=\lim_{a\to 0^+}\int_{-\infty}^t A^{-1}T_s(t-\tau)f^a(\tau)d\tau.
	\ee
The question is whether the righthand side of the above, or, equivalently, after the 
change of variables $\sigma:=t-\tau$, expression
$ \lim_{a\to 0^+}\int_0^{+\infty} A^{-1}T_s(\sigma)f^a(t-\sigma) d\sigma $,
can be interpreted as a Bochner integral $\int_0^{+\infty} A^{-1}T_s(\sigma)f(\cdot-\sigma) d\sigma$
in $L^2(\R, \bH)$,
that is, as an integral over $\sigma \in (0,+\infty)$ of an integrand valued not in $\bH$ but
in the large Hilbert space $L^2(\R,\bH)$.

Recall \cite{DU} that Bochner integrability requires Lebesgue integrability 
with respect to $\sigma$ of the norm of the integrand, in this case, integrability on $(0,+\infty)$ of
\be\label{norm}
\|A^{-1} T_s(\sigma)f(\cdot-\sigma )\|_{L^2(\R,\bH)}
=
\Big(\int_\R \int_0^1 \big(\alpha^{-1}e^{-\sigma/\alpha}f(t-\sigma,\alpha)\big)^2 d\alpha \, dt \Big)^{1/2}.
\ee

Take now $f(t, \alpha):= \alpha^{-1/2} (|\log \alpha|+1)^{-r/2} \phi(t)$, where $\phi\in L^2$ and
$r>1$. Then, 
$$
	\|f\|_{ L^2(\R,\bH)}^2= \int_\R \int_0^1 \big(\alpha^{-1/2} 
	(|\log \alpha|+1)^{-r/2} \phi(t)\big)^2 d\alpha \, dt
	= \|\phi\|_{L^2}^2 \int_0^1 \alpha^{-1} (|\log \alpha|+1)^{-r} d\alpha\big)
$$
is finite, hence $f \in L^2(\R, \bH)$.
But, 
$$
\begin{aligned}
	\Big(\int_\R \int_0^1 \big(\alpha^{-1}&e^{-\sigma/\alpha}f(t-\sigma,\alpha)\big)^2 d\alpha \, dt 
	\Big)^{1/2}=\\
	&\Big(
	\int_\R \int_0^1 \big(\alpha^{-1}e^{-\sigma/\alpha}\alpha^{-1/2} (|\log \alpha|+1)^{-r/2} \phi(t)\big)^2 d\alpha \, dt
	\Big)^{1/2}\\
	&=
	\|\phi\|_{L^2(\R,\R)}\Big(\int_0^1 \alpha^{-3} e^{-2\sigma/\alpha} (|\log \alpha|+1)^{-r} d\alpha\Big)^{1/2},
\end{aligned}
$$
after the change of coordinates $z:=\alpha/\sigma$ becomes
$$
\begin{aligned}
\|\phi\|_{L^2(\R,\R)} \sigma^{-1}   \,
	\Big(\int_0^{1/\sigma}  \frac{  z^{-3} e^{-2/z} }{(|\log z + \log \sigma|+1)^r}  dz \Big)^{1/2}
	&\gtrsim \sigma^{-1} \Big(\int_{1/2}^1   \frac{z^{-3} e^{-2/z} }{(|\log z + \log \sigma| +1)^r}  dz \Big)^{1/2}\\
	&\gtrsim \sigma^{-1} ( |\log \sigma|+1)^{-r/2}
\end{aligned}
$$
as $\sigma\to 0$, which, for $1<r\leq 2$ is not integrable on $(0,+\infty)$.
This example shows that \eqref{greens} cannot in general be expressed in terms of 
standard, Bochner integrals.

\end{document}